\DeclareMathOperator*{\argmin}{arg\,min}
\theoremstyle{definition}
\newtheorem{theorem}{Theorem}
\newtheorem{corollary}{Corollary}
\newtheorem{definition}{Definition}
\newtheorem{remark}{Remark}
\newtheorem*{problem formulation}{Problem formulation}
\begin{document}
\bstctlcite{IEEEexample:BSTcontrol}

\title{Feasible Path Identification in Optimal Power Flow with Sequential Convex Restriction
}
\author{Dongchan Lee, Konstantin Turitsyn, Daniel K. Molzahn, and Line A. Roald
\thanks{
This work was supported in part by the U.S. Department of Energy Office of Electricity as part of the DOE Grid Modernization Initiative and in part by the National Science Foundation Energy, Power, Control and Networks Award 1809314.

D. Lee is with the Department of Mechanical Engineering, Massachusetts Institute of Technology, Cambridge, MA 02139, USA (email: dclee@mit.edu).

K. Turitsyn is with the D. E. Shaw Group, New York, NY 10036 USA (e-mail: turitsyn@mit.edu).

D. K. Molzahn is with the School of Electrical and Computer Engineering, Georgia Institute of Technology, Atlanta, GA 30313 USA (e-mail: molzahn@gatech.edu).

L. A. Roald is with the Department of Electrical and Computer Engineering, University of Wisconsin, Madison, WI 53706 USA (e-mail: roald@wisc.edu).
}
}
\maketitle

\begin{abstract}
Nonconvexity induced by the nonlinear AC power flow equations challenges solution algorithms for AC optimal power flow (OPF) problems. 
While significant research efforts have focused on reliably computing high-quality OPF solutions, it is not always clear that there exists a feasible path to reach the desired operating point.
Transitioning between operating points while avoiding constraint violations can be challenging since the feasible space of the OPF problem is nonconvex and potentially disconnected.
To address this problem, we propose an algorithm that computes a provably feasible path from an initial operating point to a desired operating point.
Given an initial feasible point, the algorithm solves a sequence of convex quadratically constrained optimization problems over conservative convex inner approximations of the OPF feasible space. In each iteration, we obtain a new, improved operating point and a feasible transition from the operating point in the previous iteration. 
In addition to computing a feasible path to a known desired operating point, this algorithm can also be used to improve the operating point locally. 
Extensive numerical studies on a variety of test cases demonstrate the algorithm and the ability to arrive at a high-quality solution in few iterations.
\end{abstract}

\begin{IEEEkeywords}
Feasible Path Identification, Convex Restriction, Optimal Power Flow
\end{IEEEkeywords}

\IEEEpeerreviewmaketitle

\section{Introduction}
AC optimal power flow (OPF) is a fundamental optimization problem in power system analysis \cite{carpentier1962,stott2012, capitanescu2011stateoftheart}.
The classical form of an OPF problem seeks an operating point that is \emph{feasible} (i.e., satisfies both the AC power flow equations that model the network physics and the inequality constraints associated with operational limits on voltage magnitudes, line flows, generator outputs, etc.) and economically efficient (i.e., minimizes generation cost). 
While previous research has improved the tractability of OPF algorithms and the quality of the resulting solutions~\cite{opf_litreview1993IandII,ferc4,capitanescu2011stateoftheart,Frank2012,stott2012,pscc2014survey,abdi2017,molzahn2018fnt}, a number of challenging issues remain.
One such issue is to determine a \emph{sequence} of control actions that facilitate a safe transition from the current operating point to the desired operating point \cite{stott2012, capitanescu2011stateoftheart}.

With increasing variability due to the growth of renewable generation and the expansion of electricity markets, power systems experience more frequent and larger magnitude transitions from one operational state to another.
When there is a significant change in the operating point, the transition needs to be carried out gradually in a sequential order rather than instantaneously.
Previous literature has considered sequencing control actions to mitigate disturbances \cite{hong1993, Capitanescu2011, phan2014minimal}. References such as \cite{Capitanescu2011, phan2014minimal} formulate a mixed-integer linear programming problem to minimize the \emph{number} of control actions based on linear approximations of the AC power flow equations.
While these linear approximations avoid complications from nonlinear equality constraints, they do not guarantee feasibility of the full, non-linear AC OPF problem.
Approximations of the power flow equations may result in an infeasible state and can lead to incorrect security assessments.
This can be understood by analyzing the set of feasible dispatch points (also referred to as the feasible space) for the AC OPF problem, which is nonconvex and sometimes disconnected \cite{bukhsh_tps,Molzahn2017,Lee2018}.
Due to the complicated feasible space, a \emph{feasible path} connecting the two steady-state operating points (where each intermediate state is AC power flow feasible) can be difficult to compute or may not exist. The example shown in Figure~\ref{fig:intro} illustrates a situation where a linear transition from an initial to a desired operating point leads to constraint violations, but the piece-wise linear transition shown by the red line allows us to reach the desired operating point.

\begin{figure}[!b]
	\centering
	\includegraphics[width=2.8in]{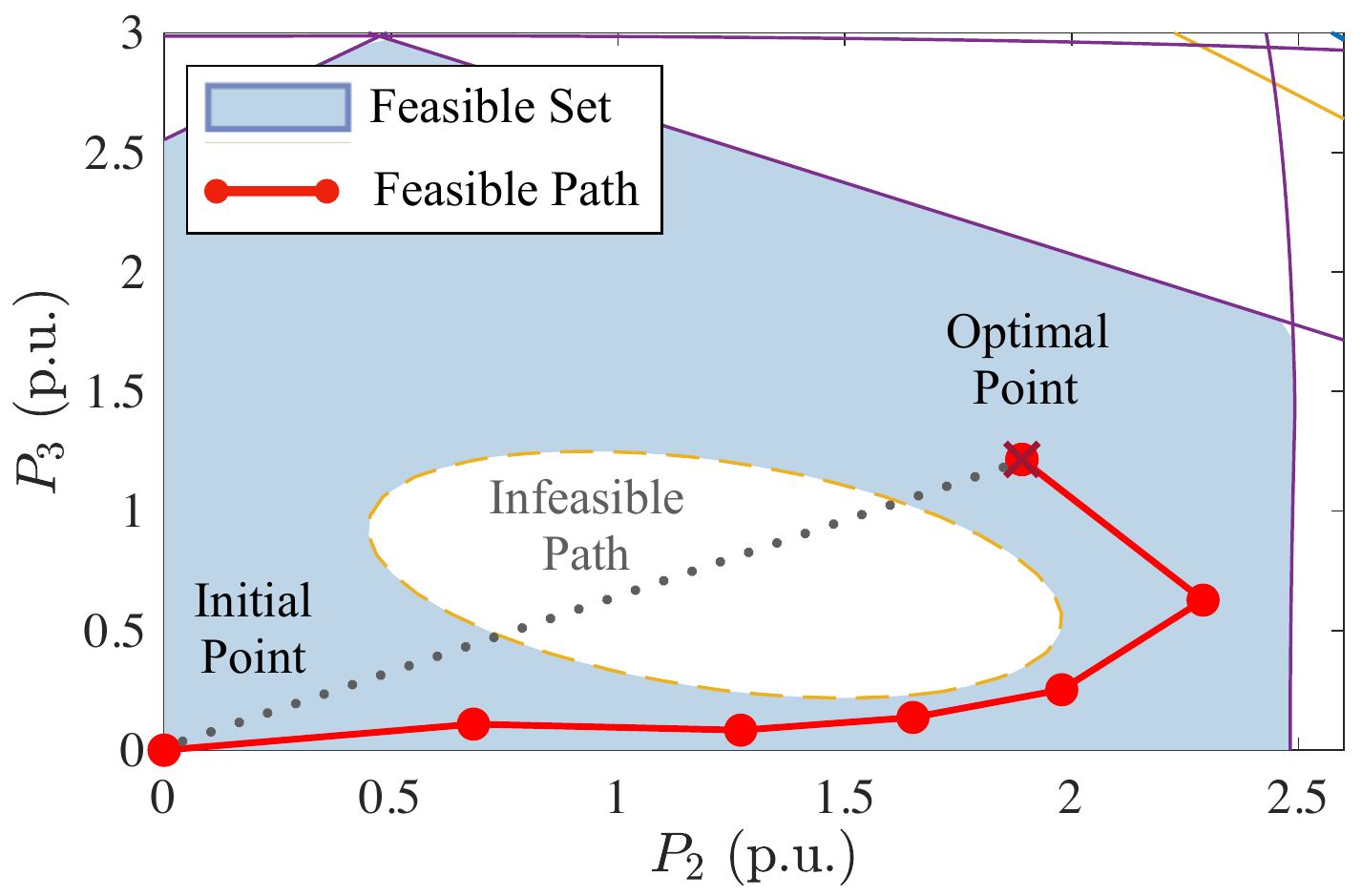}
	\caption{Illustration of a feasible path identification for a 9-bus system. The axes show the active power generation at buses 2 and 3. The blue region is the nonconvex feasible space, which is defined by the line flow limits (in purple) and the reactive power generation limits (in yellow). The black dotted line illustrates how a linear transition from the current to the desired operating point crosses through an infeasible region. The red line provides a feasible, piece-wise linear path from the origin to the desired operating point that avoids the infeasible region.}
	\label{fig:intro}
\end{figure}

The main contribution of this paper is an algorithm that allows us to compute a feasible path between two operating points. For the purposes of the paper, we are interested in \emph{steady-state feasibility}, i.e., feasibility of the nonlinear AC power flow equations and operational limits without consideration of the system dynamics. 

The idea of a feasible path may bring to mind the concept of \emph{numerical continuation}, which produces a sequence of solutions to a system of nonlinear equations when the parameters are varied along one degree of freedom~\cite{garcia81a}. 
Common applications of continuation methods include computing voltage stability margins~\cite{continuation,ieee02a,milano2010}, analyzing the power flow solvability boundary~\cite{hiskens_boundary}, and calculating multiple power flow solutions~\cite{thorp1993,molzahn_lesieutre_chen-counterexample,lesieutre2015}. Continuation methods have also been used to improve the convergence characteristics of solution algorithms for OPF problems, particularly near the voltage stability boundary~\cite{ponrajah1989,huneault1990investigation,cvijic2012,Bie18Toward}, and have additionally been applied to compute multiple local optima for OPF problems~\cite{wu_molzahn_lesieutre_dvijotham-acopf_tracing}. 
Another application of continuation methods in the context of OPF problems is the tracking of optimal dispatch points with respect to varying load parameters. This is achieved by applying continuation methods to trace the solutions to the KKT optimality conditions for the OPF problem\mbox{\cite{huneault1985continuation,almeida1994,ajjarapu1995optimal,almeida2000,huneault1990investigation}.}

While the idea of tracking the optimal solutions might seem similar to the idea of a feasible path proposed in this paper, the goal of this problem formulation is fundamentally different from our problem. 
The continuation OPF identifies a sequence of optimal solutions, frequently under limiting assumptions on nonlinear operational constraints such as line flow limits and reactive power generation limits. 
There is no guarantee that the transition between subsequent points obtained with the continuation method are feasible, and therefore the optimal ``path'' of the continuation method consists of a set of discrete, optimal points with respect to a varying load profile. 
In contrast, our method identifies a continuous path where every transition is guaranteed to be feasible with respect to both the full, non-linear AC power flow equations and other operational constraints.
%
%
Another distinction is that continuation algorithms which trace an OPF solution require that the initial operating point is the solution to an OPF problem for some load profile. Our approach specifically does not require an optimal starting point, but rather permits the calculation of a feasible transition path between any feasible operating points.

Another related research area is model predictive control, 
which has been applied to a variety of power system applications such as frequency control~\cite{mpc_nerc,mpc_agc}, voltage regulation~\cite{moradzadeh2012}, HVDC control~\cite{fuchs2013}, contingency mitigation~\cite{martin2016}, etc.
MPC approaches generally rely on linearized dynamics without considering the nonlinear AC power flow equations, and hence can not
guarantee feasibility.
Additional related work in~\cite{Bernstein2015} and~\cite{Wang2017} controls the power injections in order to enforce operational constraints in distribution systems. However, similar to MPC, these methods are focused more on real-time control as opposed to optimization. Further, these approaches are limited to radial networks with only PQ buses, making them unsuitable for transmission system applications.

We finally mention another class of related algorithms called sequential convex programming, which solve optimization problems via iteratively updated convex approximations of the equations over a trust region~\cite{conn2000trust,nocedal2006numerical}. Sequential convex programming approaches such as~\cite{zamzam2017} and~\cite{wei2017} seek OPF solutions by solving a sequence of convex power flow approximations. However, these convex approximations are not inner approximations of the feasible space and therefore do not guarantee feasibility of the intermediate iterates or the transition between them.

Thus, the main novelty of our method is that we provide a dispatch solution and \emph{an associated transition path} for which the nonlinear AC power flow equations are guaranteed to be feasible and the operational constraints are certifiably satisfied.
To the best of our knowledge, this paper proposes the first algorithm that provides guaranteed feasible paths for general OPF problems.

Specifically, we propose an algorithm for computing a sequence of control actions that ensures feasibility with respect to both the nonlinear AC power flow equations and operational limits (in the form of inequality constraints) as the system transitions from one operating point to another.
In contrast to previous work, our proposed feasible path algorithm is not limited to specific classes of systems, considers the nonlinear AC power flow model, and is tractable for large problems. Based on a quadratic convex restriction of the AC power flow feasible space~\cite{Lee2018}, we compute a piece-wise linear path connecting an initial point to a desired operating point such that all points along the path are feasible. 

We highlight two characteristics of the convex restriction which are crucial for the success of our algorithm:\\
\hspace*{5pt} (i) The convex restriction provides a \emph{conservative inner approximation} of the feasible space of the AC power flow equations, which implies that all points within the restriction are AC power flow feasible (and, by proper extensions, feasible for additional inequality constraints). This is in contrast to convex relaxations, which extend the originally nonconvex feasible space to become convex by adding infeasible points.\\
\hspace*{5pt} (ii) The convex restriction is, as the name implies, a convex set. This means that the transition between any two points within the convex restriction will also lie inside the convex restriction, hence guaranteeing that there exists a feasible AC power flow solution at any intermediate point.

In summary, the main contributions of this paper are:
\begin{enumerate}
\item We formulate the AC OPF problem based on convex restrictions from \cite{Lee2018}. This is the first formulation of an OPF with convex restriction, which requires extending the convex restriction to include line flow constraints.
The formulation guarantees that the linear transition between any two points within the restriction is feasible, is applicable to general system models, considers the nonlinear AC power flow equations, and is tractable for large problems.
\item Using the OPF with convex restriction, we propose a sequential algorithm which in each iteration (i) constructs a convex restriction around a feasible point and (ii) solves the OPF problem to obtain an improved feasible point. The algorithm's outcome is a piece-wise linear feasible path. We provide two objective functions which either achieve local improvements to the current operating point or identify a feasible path to a desired operating point. 
\item We demonstrate the capabilities of the algorithm using numerical experiments on a variety of test cases.
\end{enumerate}

The rest of the paper is organized as follows. Section~\ref{sec:model} presents the system model and preliminaries. Section~\ref{sec:restriction} reviews and extends convex restriction techniques to formulate the OPF problem, including line flow limits and other features. Section~\ref{sec:path} presents our algorithm for computing OPF solutions with corresponding feasible paths. Section~\ref{sec:experiments} demonstrates the proposed algorithm with numerical experiments and illustrative figures. Section~\ref{sec:conclusion} concludes the paper.

\section{System Model and Preliminaries}
\label{sec:model}

Consider a power network with sets of buses $\mathcal{N}$ and lines $\mathcal{E}\subseteq\mathcal{N}\times \mathcal{N}$. The scalars $n_b$, $n_g$, $n_{pq}$, and $n_l$ denote the number of buses, generators, PQ buses, and lines.
The network's incidence matrix is $E\in\mathbb{R}^{n_b\times n_l}$.
The connection matrix between generators and buses is $C\in\mathbb{R}^{n_b\times n_g}$, where the $(i,k)$ element of $C$ is equal to 1 for each bus $i$ and generator $k$ and zero otherwise. The active and reactive power generations are $p_\textrm{g}\in\mathbb{R}^{n_g}$ and $q_\textrm{g}\in\mathbb{R}^{n_g}$. Specified values of active and reactive load demands are denoted $p_\textrm{d}\in\mathbb{R}^{n_b}$ and $q_\textrm{d}\in\mathbb{R}^{n_b}$. The buses' voltage magnitudes and phase angles are $v\in\mathbb{R}^{n_b}$ and $\theta\in\mathbb{R}^{n_b}$.
The superscripts ``$\textrm{f}$'' and ``$\textrm{t}$'' denote \textit{from} and \textit{to} buses for the lines. The subscripts ``$\textrm{v}\theta$'', ``$\textrm{ns}$'', ``$\textrm{pv}$'', and ``$\textrm{pq}$'' denote the slack (V$\theta$), non-slack (non-V$\theta$), PV, and PQ elements of the corresponding vector. Superscript ``$T$'' denotes the transpose. $I$ and $\mathbf{0}$ denote identity and zero matrix of appropriate size.

\subsection{Phase-Adjusted AC Power Flow Formulation}
To set the stage for our further discussion, we describe a slightly modified representation of the standard AC power flow equations, the so-called \emph{phase-adjusted AC power flow formulation}.
The formulation is defined relative to a known, feasible \emph{base point} denoted by the subscript $0$, i.e., $v_0$ and $\theta_0$ denote the base point's voltage magnitude and phase angle. 

The angle differences across each line are
\begin{equation}
	\varphi_l=\theta_l^\textrm{f}-\theta_l^\textrm{t}, \hskip2em l=1,\ldots,n_l,
\end{equation}
where $\theta_l^\textrm{f}$ and $\theta_l^\textrm{t}$ denote the phase angle of the \textit{from} bus and \textit{to} bus of line $l$. This can be equivalently expressed as $\varphi=E^T\theta$. The \emph{phase-adjusted} angle differences are then defined as
\begin{equation}
    \tilde{\varphi}=\varphi-\varphi_0=E^T\theta-E^T\theta_0= E^T(\theta-\theta_0), \nonumber
\end{equation}
where $\varphi_0$ is the base phase angle differences. 
With this, the phase-adjusted AC Power Flow equations can be written for each bus $k=1,\ldots,n_b$,
\begin{subequations} \begin{align}
p^\textrm{inj}_k=\sum_{l=1}^{n_l} v^\textrm{f}_lv^\textrm{t}_l\left(\widehat{G}^c_{kl}\cos{\tilde{\varphi}_l}+\widehat{B}^s_{kl}\sin{\tilde{\varphi}_l} \right)+G_\mathit{kk}^dv_k^2,\\
q^\textrm{inj}_k=\sum_{l=1}^{n_l} v^\textrm{f}_lv^\textrm{t}_l\left(\widehat{G}^s_{kl}\sin{\tilde{\varphi}_l}-\widehat{B}^c_{kl}\cos{\tilde{\varphi}_l} \right)-B_\mathit{kk}^dv_k^2,
\end{align} \label{eqn:DACPA} \end{subequations}


\noindent where $v^\textrm{f}_l$ and $v^\textrm{t}_l$ denote the voltage magnitude at the \textit{from} and \textit{to} buses of line $l$. The active and reactive power injections are
$p^\textrm{inj}=Cp_\textrm{g}-p_{\textrm{d}}$ and $q^\textrm{inj}=Cq_\textrm{g}-q_{\textrm{d}}$. The matrices $\widehat{G}^\textrm{c},\ \widehat{G}^\textrm{s},\ \widehat{B}^\textrm{c},\ \widehat{B}^\textrm{s}\in\mathbb{R}^{n_b\times n_l}$ and $G^d,\ B^d\in\mathbb{R}^{n_b\times n_b}$ are phase-adjusted admittance matrices defined relative to the base point, and their derivations are shown in the Appendix. In addition to the power flow equations in \eqref{eqn:DACPA}, the OPF problem enforces the following operational constraints:
\begin{subequations} \begin{align}
&p_{\textrm{g},i}^\textrm{ min}\!\leq\! p_{\textrm{g},i}\!\leq\! p_{\textrm{g},i}^\textrm{max}, \ q_{\textrm{g},i}^\textrm{min}\!\leq\! q_{\textrm{g},i}\!\leq\! q_{\textrm{g},i}^\textrm{max}, \ & i&\!=\!1,\ldots,n_g, \label{eqn:pgenlim} \\
&v_j^{\textrm{min}}\!\leq\! v_j\!\leq\! v_j^{\textrm{max}}, \ &j&\!=\!1,\ldots,n_b \label{eqn:vlim}\\ 
& \varphi^{\textrm{min}}_l\!\leq\! \varphi_l\!\leq\! \varphi^{\textrm{max}}_l, \ & l&\!=\!1,\ldots,n_l, \label{eqn:anglelim} \\
&(s_{p,l}^\textrm{f})^2+(s_{q,l}^\textrm{f})^2\leq (s^{\textrm{max}}_l)^2, \ & l&\!=\!1,\ldots,n_l, \label{eqn:linelim_f} \\ &(s_{p,l}^\textrm{t})^2+(s_{q,l}^\textrm{t})^2\leq (s^{\textrm{max}}_l)^2, \ & l&\!=\!1,\ldots,n_l. \label{eqn:linelim_t}
\end{align}%
\label{eqn:OPconstr}%
\end{subequations}%
Equation \eqref{eqn:pgenlim} represents the generators' active and reactive power capacity limits,  $p_g^{\text{max}}, p_g^{\text{min}}$ and $q_g^{\text{max}}, q_g^{\text{min}}$, respectively. Equations~\eqref{eqn:vlim} and \eqref{eqn:anglelim} limit the voltage magnitudes to the range $v^{\text{min}}, v^{\text{max}}$ and enforce stability limits on the angle differences $\varphi^{\text{min}}, \varphi^{\text{max}}$. Equations \eqref{eqn:linelim_f} and \eqref{eqn:linelim_t} impose the line capacity limit $s^\textrm{max}$ where
$s_{p,l}^\textrm{f},\ s_{q,l}^\textrm{f}$ represent the active and reactive power flowing into the line~$l$ at the \textit{from} buses, and $s_{p,l}^\textrm{t}$, $s_{q,l}^\textrm{t}$ represent the active and reactive power flowing into the line $l$ at the \textit{to} buses. 

The phase-adjusted AC power flow equations can be expressed in terms of basis functions, which are defined as
\begin{equation} \begin{aligned}
\psi_l^\mathrm{C}(v,\varphi)&=v^\mathrm{f}_lv^\mathrm{t}_l\cos{(\varphi_l-\varphi_{0,l})}, \ & l&=1,\ldots,n_l, \\
\psi_l^\mathrm{S}(v,\varphi)&=v^\mathrm{f}_lv^\mathrm{t}_l\sin{(\varphi_l-\varphi_{0,l})}, \ & l&=1,\ldots,n_l, \\
\psi_k^\mathrm{Q}(v,\varphi)&=v_k^2, \ & k&=1,\ldots,n_b.
\label{eqn:basisfunc}
\end{aligned} \end{equation}
The power flow equations~\eqref{eqn:DACPA} can then be rewritten as
\begin{equation}
\begin{bmatrix} Cp_\textrm{g}-p_\textrm{d} \\[+2pt] Cq_\textrm{g}-q_\textrm{d} \end{bmatrix}
+
\begin{bmatrix}
-\widehat{G}^\textrm{c} & -\widehat{B}^\textrm{s} & -G^\mathrm{d} \\
\widehat{B}^\textrm{c} & -\widehat{G}^\textrm{s} & B^\mathrm{d}
\end{bmatrix} \psi(v,\varphi)=0,
\label{eqn:pf_vec}\end{equation}
where $\psi(v,\varphi)=\begin{bmatrix}\psi^\mathrm{C}(v,\varphi)^T & \psi^\mathrm{S}(v,\varphi)^T & \psi^\mathrm{Q}(v,\varphi)^T\end{bmatrix}^T$.

\subsection{Control and State Variables}
Standard power system definitions divide the system into three sets of buses:
\begin{itemize}
    \item PV buses: $p_\textrm{pv}^\textrm{inj},\,v_\textrm{pv}$ specified; $q_\textrm{pv}^\textrm{inj},\,\theta_\textrm{pv}$ implicitly defined.
    \item PQ buses: $p_\textrm{pq}^\textrm{inj},\,q_\textrm{pq}^\textrm{inj}$ specified; $v_\textrm{pq},\,\theta_\textrm{pq}$ implicitly defined.
    \item V$\theta$ (slack) bus: $v_{\textrm{v}\theta},\,\theta_{\textrm{v}\theta}$ specified; $p_{\textrm{v}\theta}^\textrm{inj},\,q_{\textrm{v}\theta}^\textrm{inj}$ implicitly defined.
\end{itemize}
For the analysis, variables that are explicitly set by the system operator are control variables, and variables that are implicitly determined through the AC power flow equations are state variables. 
Constants such as the active and reactive power load on PQ buses and the reference angle $\theta_{\textrm{v}\theta}\!=\!0$ are not considered as variables.

The \emph{control} variables are the active power outputs of generators at PV buses, $p_\textrm{pv}$, and the voltage magnitudes at the V$\theta$ and PV buses, $v_\textrm{g}$. The \emph{state} variables are 
the phase angles at non-slack buses, $\theta_\textrm{ns}$, and voltage magnitude at PQ buses, $v_\textrm{pq}$. The state variables are implicitly defined through the power flow equations given a set of control variables.
The set of control variables are denoted by $u\in\mathbb{R}^{2n_g-1}$, and state variables are denoted by $x\in\mathbb{R}^{n_b-1+n_{pq}}$ where
\begin{equation}
u=\begin{bmatrix} p_{\textrm{pv}} \\ v_\textrm{g}\end{bmatrix}, \hskip2em x=\begin{bmatrix} \theta_\textrm{ns} \\ v_\textrm{pq}\end{bmatrix}.
\end{equation}

In addition, the \emph{intermediate} variables $[p_{\textrm{v}\theta}, q_{\textrm{v}\theta}, q_\textrm{pv}]$ are explicitly defined by the power flow equations and a given set of state and control variables $(x,u)$.

 For a given set of control variables $u$, the state variables $x$ are obtained from a subset of the phase-adjusted power flow equations \eqref{eqn:pf_vec},
\begin{equation}
\underbrace{\begin{bmatrix}
C_\textrm{ns}p_\textrm{g}-p_\textrm{d,ns} \\ C_\textrm{pq}q_\textrm{g}-q_\textrm{d,pq}\end{bmatrix}}_{\tau(u)}
+\underbrace{\begin{bmatrix}
-\widehat{G}^\textrm{c}_\textrm{ns} & -\widehat{B}^\textrm{s}_\textrm{ns} & -G^\mathrm{d}_\textrm{ns} \\
\widehat{B}^\textrm{c}_\textrm{pq} & -\widehat{G}^\textrm{s}_\textrm{pq} & B^\mathrm{d}_\textrm{pq}
\end{bmatrix}}_{M_\textrm{eq}}\psi(v,\varphi)=0,
\label{eqn:pf_eqn}
\end{equation}
where $\tau(u)$ denotes the active and reactive power injections at certain buses. The matrix $\widehat{G}^\textrm{c}_\textrm{ns}\in\mathbb{R}^{(n_b-1)\times n_l}$ contains the rows corresponding to the non-slack buses from $\widehat{G}^\textrm{c}\in\mathbb{R}^{n_b\times n_l}$, and $\widehat{B}^\textrm{c}_\textrm{pq}\in\mathbb{R}^{n_{pq}\times n_l}$ contains the rows corresponding to PQ buses from $\widehat{B}^\textrm{c}\in\mathbb{R}^{n_b\times n_l}$. The other submatrices are defined similarly. Note that~\eqref{eqn:pf_eqn} is a square system of equations where the number of state variables and the number of equations are the same. 
This is a minimal subset of the AC power flow equations that completely describes the relationships among the control and state variables.

The intermediate variables (i.e., the active power at the V$\theta$ bus $p_{\textrm{v}\theta}$ and the reactive power at the V$\theta$ and PV buses $q_{\textrm{v}\theta}, q_{\textrm{pv}}$) are functions of state and control variables $(x,u)$:
\begin{equation}
\underbrace{\begin{bmatrix} 
C_{\textrm{v}\theta}p_\textrm{g}-p_{\textrm{d,v}\theta} \\
C_{\textrm{v}\theta}q_\textrm{g}-q_{\textrm{d,v}\theta} \\
C_\textrm{pv}q_\textrm{g}-q_\textrm{d,pv}
\end{bmatrix}}_{\zeta(p_g,q_g)}
=\underbrace{\begin{bmatrix}
\widehat{G}^\textrm{c}_{\textrm{v}\theta} & \widehat{B}^\textrm{s}_{\textrm{v}\theta} & G^\mathrm{d}_{\textrm{v}\theta} \\
-\widehat{B}^\textrm{c}_{\textrm{v}\theta} & \widehat{G}^\textrm{s}_{\textrm{v}\theta} & -B^\mathrm{d}_{\textrm{v}\theta} \\
-\widehat{B}^\textrm{c}_\textrm{pv} & \widehat{G}^\textrm{s}_\textrm{pv} & -B^\mathrm{d}_\textrm{pv}
\end{bmatrix}}_{M_\textrm{ineq}}\psi(v,\varphi).
\label{eqn:pf_ineqn}
\end{equation}
This is a subset of the AC power flow equations that are necessary to define the intermediate variables.
Line flows can be represented in terms of the phase-adjusted basis functions,
\begin{equation}
\underbrace{\begin{bmatrix} s_p^\textrm{f} \\ s_q^\textrm{f} \end{bmatrix}}_{s^\textrm{f}}
=\underbrace{\begin{bmatrix}
G_\mathrm{ft} & B_\mathrm{ft} & G_\mathrm{ff}E_\mathrm{f}^T \\
-B_\mathrm{ft} & G_\mathrm{ft} & -B_\mathrm{ff}E_\mathrm{f}^T
\end{bmatrix}}_{L_\textrm{line}^\textrm{f}}\psi(v,\varphi),
\label{eqn:lineflow_f} \end{equation}

\begin{equation}
\underbrace{\begin{bmatrix} s_p^\textrm{t} \\ s_q^\textrm{t} \end{bmatrix}}_{s^\textrm{t}}
=\underbrace{\begin{bmatrix}
G_\mathrm{tf} & -B_\mathrm{tf} & G_\mathrm{tt}E_\mathrm{t}^T \\
-B_\mathrm{tf} & -G_\mathrm{tf} & -B_\mathrm{tt}E_\mathrm{t}^T
\end{bmatrix}}_{L_\textrm{line}^\textrm{t}}\psi(v,\varphi),
\label{eqn:lineflow_t} \end{equation}
where the Appendix gives the block matrices in $L_\textrm{line}^\textrm{f}$ and $L_\textrm{line}^\textrm{t}$.

\subsection{Phase-Adjusted AC Optimal Power Flow}
The AC OPF problem can be written based on the phase-adjusted AC power flow with the consideration of state and control variables. This formulation is equivalent to the classical form of the AC OPF problem without any approximation. 
The AC OPF problem identifies the operating point with minimum generation cost while respecting the operational constraints:

\begin{subequations}
\begin{equation}
\underset{x,u,\overline{s}^\textrm{f},\overline{s}^\textrm{t}}{\text{minimize}} \hskip 1em c(p_\textrm{g})=\sum_{i=1}^{n_g} c_i (p_{\textrm{g},i})
\label{eqn:cost}
\end{equation}
\begin{equation}
\begin{aligned}
\text{subject to} \hskip 2em \tau(u)+M_\textrm{eq}\psi(v,\varphi)=0
\end{aligned}\label{eqn:opf_eqn}\end{equation}
\begin{equation}\begin{aligned}
\zeta(p_g^\textrm{min},q_g^\textrm{min})\leq M_\textrm{ineq}\psi(v,\varphi)\leq \zeta(p_g^\textrm{max},q_g^\textrm{max})
\end{aligned}
\label{eqn:opf_ineqn}
\end{equation}
\begin{equation}
    \underbrace{\begin{bmatrix} E^T_\textrm{ns} & \mathbf{0} \\
    \mathbf{0} & I \\
    -E^T_\textrm{ns} & \mathbf{0} \\
    \mathbf{0} & -I \\
    \end{bmatrix}}_{A}x\leq
    \underbrace{\begin{bmatrix}
    \varphi^\textrm{max} \\ v^\textrm{max}_\textrm{pq} \\ -\varphi^\textrm{min} \\ -v^\textrm{min}_\textrm{pq}
    \end{bmatrix}}_{b^\textrm{max}},\ 
    \underbrace{\begin{bmatrix} p_{\textrm{pv}}^\textrm{min} \\ v_\textrm{g}^\textrm{min} \end{bmatrix}}_{u^\textrm{min}} \leq u\leq \underbrace{\begin{bmatrix} p_{\textrm{pv}}^\textrm{max} \\ v_\textrm{g}^\textrm{max} \end{bmatrix}}_{u^\textrm{max}}
    \label{eqn:xu_lim}
\end{equation}
\begin{equation} 
\begin{aligned}
\lvert L_\textrm{line}^\textrm{f} \psi(x,u)\rvert &\leq \overline{s}^\textrm{f},\ \lvert L_\textrm{line}^\textrm{t} \psi(x,u)\rvert \leq \overline{s}^\textrm{t} \\
\left(\overline{s}_p^{\textrm{f}}\right)^2+\left(\overline{s}_q^{\textrm{f}}\right)^2&\leq \left(s^\textrm{max}\right)^2,\
\left(\overline{s}_p^{\textrm{t}}\right)^2+\left(\overline{s}_q^{\textrm{t}}\right)^2\leq \left(s^\textrm{max}\right)^2
\end{aligned} \label{eqn:opf_linelim} 
\end{equation}
\label{eqn:OPF}\end{subequations}
The cost function of each generator $i$, $c_i(p_{g,i})$, is assumed to be monotonically increasing with respect to the active power generation. Equation \eqref{eqn:opf_eqn} contains the subset of power flow equations which relate the control and state variables, and the matrix $M_\textrm{eq}$ is defined in \eqref{eqn:pf_eqn}. Equation \eqref{eqn:opf_ineqn} imposes constraints on the intermediate variables (the active power on the V$\theta$ bus and reactive power on generator buses). 
The matrix $M_\textrm{ineq}$ and function $\zeta(p_g,q_g)$ are defined in \eqref{eqn:pf_ineqn}.
Equation \eqref{eqn:xu_lim} imposes the voltage magnitude limits, active power limits for PV buses, and angle limits. Equation \eqref{eqn:opf_linelim} imposes the line flow limits with $L^\textrm{f}_\textrm{line}$ and $L^\textrm{t}_\textrm{line}$ defined in \eqref{eqn:lineflow_f} and \eqref{eqn:lineflow_t}.
The matrix $E_\textrm{ns}\in\mathbb{R}^{(n_b-1)\times n_l}$ is a submatrix of $E$ that selects the rows corresponding to the non-slack buses. 
\section{Optimal Power Flow with Convex Restriction}
\label{sec:restriction}

In this section, we summarize the procedure of obtaining a \emph{convex restriction} for the AC OPF problem. A convex restriction provides a convex condition on the control variables $u$ such that there exists state variables $x$ that satisfy both the AC power flow equations in \eqref{eqn:DACPA} and the operational constraints in \eqref{eqn:OPconstr}. A sufficient convex condition for AC power flow feasibility was developed in \cite{Lee2018}, and we extend its application to solve the full OPF problem including line flow limits.

\subsection{Quadratic Convex Restriction of the Feasible Space}
\subsubsection{Power Flow Constraints in Fixed Point Form}
The convex restriction is constructed around the known, feasible base point $(x_0,u_0)$, which is assumed to have a non-singular power flow Jacobian with respect to the state variables.\footnote{If the power flow Jacobian is singular, the system is operating at the nose of PV curve where the solution to power flow equations can disappear by an arbitrarily small perturbation in the power injections.} 
Consider the power flow equations in \eqref{eqn:pf_eqn} as finding the zeros of  $f(x,u)=\tau(u)+M_\textrm{eq}\psi(v,\varphi)$. Let us denote the Jacobian with respect to $x$ as $J_{f,0}=\nabla_x f|_{(x_0,u_0)}=M_\textrm{eq}J_{\psi,0}$ where $J_{\psi,0}=\nabla_x \psi(v,E^T\theta)|_{(v_0,\varphi_0)}$. Then, we can write the power flow equations in the following fixed-point form
\begin{equation}\begin{aligned}
    x&=-J_{f,0}^{-1}(f(x,u)-J_{f,0}x)\\
    &=-J_{f,0}^{-1}\left(M_{eq}g(x,u)+\tau(u)\right),
\end{aligned}\label{eq:fixed-point-form}
\end{equation}
where $g(x,u)$ represents the residual of the basis functions,
\begin{equation}
    g(x,u) = \psi(v,\varphi) - J_{\psi,0}x.
\end{equation}
Note that \eqref{eq:fixed-point-form} corresponds to a single iteration of the Newton-Raphson procedure, which is commonly used to solve the power flow equations.

\subsubsection{Sufficient Condition for Existence of $x$}
The derivation of the sufficient condition for AC power flow solvability relies on Brouwer's Fixed Point Theorem.
\begin{theorem} (Brouwer's Fixed Point Theorem \cite{brouwer1911abbildung}) 
Let $\mathcal{P}\subseteq\mathbb{R}^n$ be a nonempty compact convex set and $F\!:\!\mathcal{P}\!\rightarrow\!\mathcal{P}$ be a continuous mapping. Then there exists some $x\in\mathcal{P}$ such that $F(x)=x$.
\end{theorem}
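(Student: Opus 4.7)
The plan is to reduce the general case to the closed unit ball $B^n$ via a homeomorphism, and then to prove the fixed-point property for $B^n$ by contradiction using the topological fact that $B^n$ does not retract onto its boundary sphere $S^{n-1}$.

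First I would argue that it suffices to prove the theorem when $\mathcal{P}$ is the closed unit ball $B^d$ in $\mathbb{R}^d$, where $d$ is the dimension of the affine hull of $\mathcal{P}$. Since $\mathcal{P}$ is compact, convex, and nonempty, it has a nonempty relative interior; pick any relative interior point $x_0$ and use the Minkowski-functional-based radial map $h(x) = (x-x_0)/\rho(x-x_0)$ (suitably rescaled) to build an explicit homeomorphism $h: \mathcal{P} \to B^d$. The conjugated map $\tilde{F} = h \circ F \circ h^{-1}: B^d \to B^d$ is continuous, and fixed points transfer back under $h^{-1}$. Thus the whole problem collapses to showing that every continuous self-map of $B^d$ has a fixed point.

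Next I would argue by contradiction: assume $F: B^d \to B^d$ has no fixed point, so the vector $x - F(x)$ never vanishes. Define $r: B^d \to S^{d-1}$ by shooting the ray from $F(x)$ through $x$ and taking its intersection with the boundary sphere; explicitly, $r(x) = x + t(x)\bigl(x - F(x)\bigr)$, where $t(x) \geq 0$ is the unique root of the quadratic condition $\|r(x)\| = 1$. Since $F(x) \neq x$ everywhere and $F$ is continuous, $t(\cdot)$ depends continuously on $x$, and one checks that $r(x) = x$ whenever $x \in S^{d-1}$. Hence $r$ is a continuous retraction of $B^d$ onto its boundary.

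The main obstacle is then to show that no such retraction exists, which is the no-retract theorem. I see several routes: one could appeal to algebraic topology and observe that a retraction would split the inclusion $S^{d-1} \hookrightarrow B^d$, forcing the induced map $H_{d-1}(S^{d-1}) \to H_{d-1}(B^d)$ to be injective, contradicting $H_{d-1}(S^{d-1}) = \mathbb{Z}$ and $H_{d-1}(B^d) = 0$; or one could use degree theory to note that the identity on $S^{d-1}$ has degree $1$ while any map extending continuously over $B^d$ must restrict to the boundary with degree $0$. For a fully self-contained argument I would instead give a combinatorial proof via Sperner's lemma: triangulate a simplex $\Delta^d$ (which is homeomorphic to $B^d$) to arbitrarily fine mesh, color vertices according to which barycentric coordinate of $F$ drops relative to the vertex, apply Sperner's lemma to extract a fully-colored sub-simplex at each mesh level, and use compactness of $\Delta^d$ together with continuity of $F$ to pass to a limit point where all coordinates of $x - F(x)$ are simultaneously nonpositive, forcing $F(x) = x$. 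This last route avoids topological machinery entirely and is the one I would write up in detail.
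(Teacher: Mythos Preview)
Your proof sketch is a correct and standard route to Brouwer's fixed point theorem: reduce to the ball via the Minkowski-functional homeomorphism, then argue by contradiction that a fixed-point-free map would yield a retraction $B^d \to S^{d-1}$, and rule this out either by homology/degree or combinatorially via Sperner's lemma. There is nothing mathematically wrong with it.

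However, the paper does not prove this theorem at all. It is stated as Theorem~1 with a citation to Brouwer's original 1911 paper and is invoked as a black box: the authors only \emph{use} the theorem (applied to the fixed-point form of the power flow equations in~\eqref{eq:fixed-point-form}) to derive the self-mapping condition~\eqref{eqn:condition}, which in turn underlies the convex restriction in Theorem~2. So your proposal is not so much a different route as an entirely gratuitous one in this context: the paper's ``proof'' is simply the citation \cite{brouwer1911abbildung}, and reproducing a full argument for Brouwer would be out of scope for a power-systems paper. If you are writing this up for the paper, the appropriate thing to do is exactly what the authors did: state the theorem, cite it, and move on.
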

In our approach, the map $F$ corresponds to the power flow equations \eqref{eq:fixed-point-form}. We define the self-mapping set $\mathcal{P}$ as 
\begin{equation}
\begin{aligned}
    \mathcal{P}(b)&=\{x\mid \underline{\varphi}\leq\varphi\leq\overline{\varphi},\ \underline{v}_\textrm{pq}\leq v_\textrm{pq}\leq\overline{v}_\textrm{pq}\} \\
    &=\{x\mid Ax\leq b\},
\end{aligned}
\end{equation}
where the matrix $A$
is defined in \eqref{eqn:xu_lim} and the bound $b$
is
\begin{equation}
b=\begin{bmatrix}\overline{\varphi}^T & \overline{v}_\textrm{pq}^T & -\underline{\varphi}^T & -\underline{v}_\textrm{pq}^T \end{bmatrix}^T.
\label{eqn:b_def}
\end{equation}
The polytope $\mathcal{P}(b)$ is a closed and compact set parametrized by $b$, which provides the upper and lower bounds on the state variables. The bounds $b$ are \emph{not} the same as the limits provided in 
\eqref{eqn:anglelim}, but are decision variables. Then Brouwer's fixed point condition is equivalent to the existence of $b\in\mathbb{R}^{(2n_{pq}+2n_l)}$ such that
\begin{equation}
    \max_{x\in\mathcal{P}(b)} Kg(x,u)-AJ_{f,0}^{-1}\tau(u) \leq b,
    \label{eqn:condition}
\end{equation}
where $K=-AJ_{f,0}^{-1}M_{eq}$.

\subsubsection{Concave envelopes and bounds for $g(x,u)$ and $\psi(x,u)$}
A \emph{concave envelope} of a function $g(x,u)$ is given by a concave under-estimator 
$\underline{g}_k(x,u)$ and a convex over-estimator $\overline{g}_k(x,u)$, such that 
\begin{equation}
    \underline{g}_k(x,u) \leq g_k(x,u) \leq \overline{g}_k(x,u).
\end{equation}
Given this concave envelope, the bound on $g_k$ over the domain $\mathcal{P}(b)$ is
\begin{equation}
\label{eq:gbar}
\begin{aligned}
\overline{g}_{\mathcal{P},k}(u,b)&\geq\max_{x\in\mathcal{P}(b)}\overline{g}_k(x,u)=\max_{x\in\partial\mathcal{P}_k(b)}\overline{g}_k(x,u), \\
\underline{g}_{\mathcal{P},k}(u,b)&\leq\min_{x\in\mathcal{P}(b)}\underline{g}_k(x,u)=\min_{x\in\partial\mathcal{P}_k(b)}\underline{g}_k(x,u),
\end{aligned}
\end{equation}
where $\partial\mathcal{P}_k(b)$ is the set of vertices in polytope $\mathcal{P}(b)$ that are involved in function $g_k(x,u)$.
For the second equality, we exploit the fact that since the envelopes are concave for the minimization problem and convex for the maximization problem in~\eqref{eq:gbar}, the extreme values $\overline{g}_{\mathcal{P},k}(u,b)$, $\underline{g}_{\mathcal{P},k}(u,b)$ will occur at one of the vertices in $\partial\mathcal{P}(b)$. 
Hence, we can ensure that the max/min inequalities holds over the polytope by requiring all vertices to satisfy the above inequalities.
Fig.~\ref{fig:concave_env} illustrates the concave envelope and the bounds over the polytope $\mathcal{P}(b)$ for an example function.

\begin{figure}
	\centering
 	\includegraphics[width=2.4in]{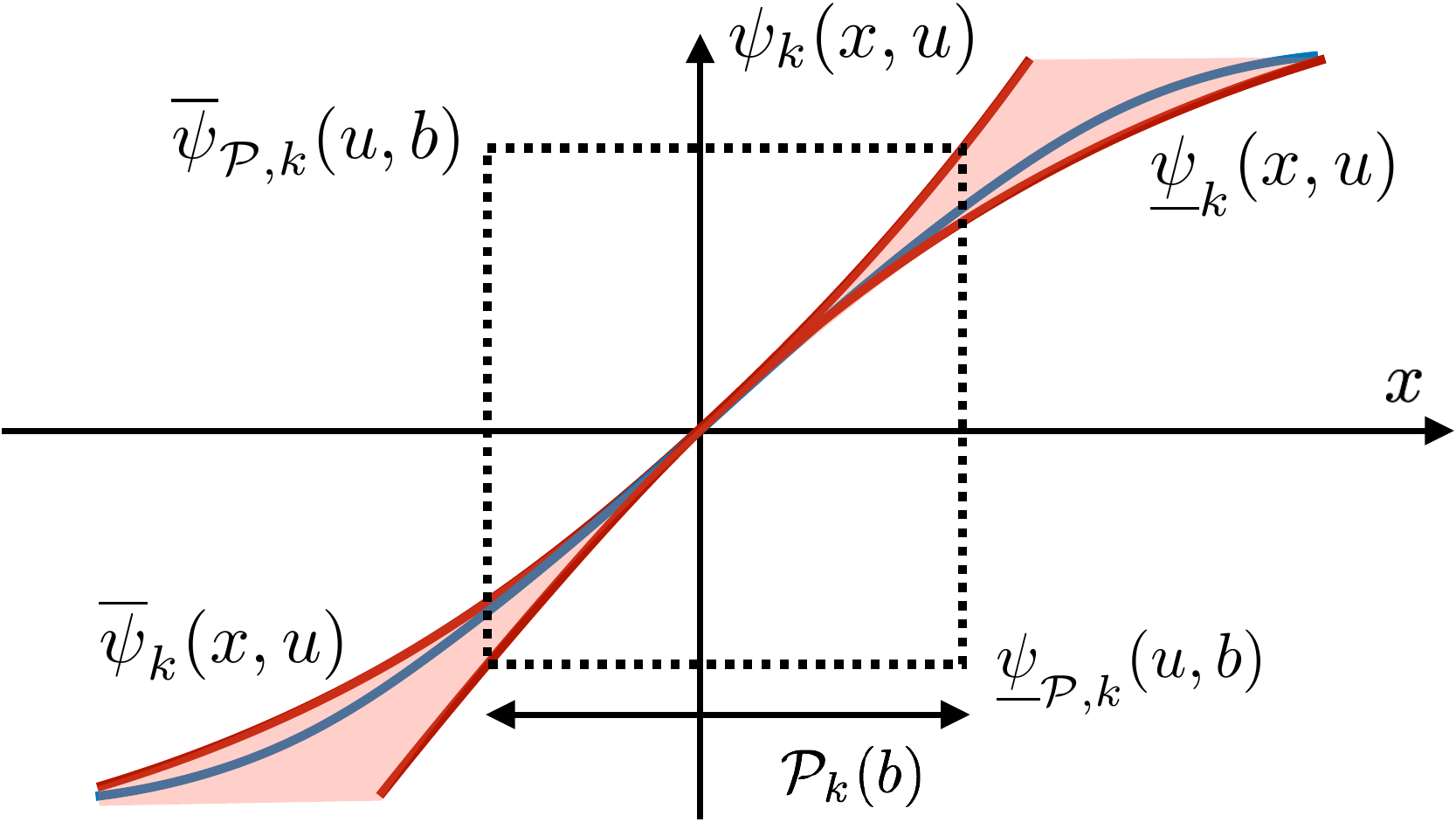}
	\caption{Illustration of the concave envelope (in red) for the function $\psi(x,u)$ (in blue) and the corresponding bounds on $\psi(x,u)$ over the interval $\mathcal{P}_k(b)$.}
	\label{fig:concave_env}
\end{figure}

In the power flow equations, the functions $g(x,u)$ can be expressed as a combination of bilinear, cosine, and sine functions. The concave envelopes for these functions from \cite{Lee2018} are provided below. 
The envelopes of a bilinear function are
\begin{displaymath}
\begin{aligned}
\langle xy \rangle^{Q} &\geq-\frac{1}{4}[(x-x_0)-(y-y_0)]^2+x_0y+xy_0-x_0y_0, \\
\langle xy \rangle^{Q}&\leq\frac{1}{4}[(x-x_0)+(y-y_0)]^2+x_0y+xy_0-x_0y_0.
\end{aligned}
\end{displaymath}
For trigonometric functions, we exploit the angle difference limits with the phase-adjusted power flow formulation to construct a tight concave envelope.
Assuming $\varphi^\textrm{max} \in \left[0,\,\pi\right]$ and $\varphi^\textrm{min} \in \left[-\pi,\,0\right]$, the concave envelopes for the sine and cosine functions are
\begin{displaymath}
\begin{aligned}
\langle\sin\tilde{\varphi}\rangle^{S}&\geq\tilde{\varphi}+\left(\frac{\sin\tilde{\varphi}^\textrm{max}-\tilde{\varphi}^\textrm{max}}{(\tilde{\varphi}^\textrm{max})^2}\right)\tilde{\varphi}^2, \ \tilde{\varphi}<\tilde{\varphi}^\textrm{max}, \\
\langle\sin\tilde{\varphi}\rangle^{S}&\leq \tilde{\varphi}+\left(\frac{\sin\tilde{\varphi}^\textrm{min}-\tilde{\varphi}^\textrm{min}}{(\tilde{\varphi}^\textrm{min})^2}\right)\tilde{\varphi}^2, \ \tilde{\varphi}>\tilde{\varphi}^\textrm{min},\\
\langle\cos\tilde{\varphi}\rangle^{C}&\geq1-\frac{1}{2}\tilde{\varphi}^2, \ \
\langle\cos\tilde{\varphi}\rangle^{C}\leq1.
\end{aligned}
\end{displaymath}

The upper bounds on $g(x,u)$ over $\mathcal{P}(b)$ are defined as
\begin{displaymath}
\begin{aligned}
\overline{g}_{\mathcal{P},l}^C(u,b)&\geq\max_{x_l\in\mathcal{X}_l} \langle\langle v^\mathrm{f}_lv^\mathrm{t}_l\rangle^Q \langle\cos{\tilde{\varphi}_l}\rangle^C\rangle^Q-v^\mathrm{f,pq}_{0,l}v^\mathrm{t}_l-v^\mathrm{f}_lv^\mathrm{t,pq}_{0,l} \\
\overline{g}_{\mathcal{P},l}^\mathrm{S}(u,b)&\geq\max_{x_l\in\mathcal{X}_l} \langle\langle v^\mathrm{f}_lv^\mathrm{t}_l\rangle^Q \langle\sin{\tilde{\varphi}_l}\rangle^S\rangle^Q-v^\mathrm{f}_{0,l}v^\mathrm{t}_{0,l}\tilde{\varphi}_l \\
\overline{g}_{\mathcal{P},k}^Q(u,b)&\geq\max_{v_k\in\{\overline{v}_k,\underline{v}_k\}}\langle v_kv_k\rangle^Q-2v^\mathrm{pq}_{0,k}v_k,
\end{aligned}
\end{displaymath}
where the constant $v^\mathrm{pq}_{0,k}$ is set to $v_{k,0}$ if bus $k$ is a PQ bus and 0 if it is non-PQ bus. Similarly, $v^\mathrm{f,pq}_{0,l}$ and $v^\mathrm{t,pq}_{0,l}$ are set to $v^\mathrm{f}_{0,l}$ and $v^\mathrm{t}_{0,l}$ if they are PQ buses and 0 if they are \mbox{non-PQ} buses. The vertices are defined by $x_l=(v_l^\textrm{f},\,v_l^\textrm{t},\,\tilde{\varphi}_l)$ and $\mathcal{X}_l=\{(v_l^\textrm{f},v_l^\textrm{t},\tilde{\varphi}_l)\mid v_l^\textrm{f}\in\{\underline{v}_l^\textrm{f},\,\overline{v}_l^\textrm{f}\},\,v_l^\textrm{t}\in\{\underline{v}_l^\textrm{t},\,\overline{v}_l^\textrm{t}\},\,\tilde{\varphi}_l\in\{\underline{\varphi}_k-\varphi_{0,k},\,\overline{\varphi}_k-\varphi_{0,k}\}\}$. 

Note that the number of vertices that need to be checked for each line is constant, i.e., the cardinality of $\mathcal{X}_l$ is $2^3$ regardless of the size of the system. Similarly, $\underline{g}_{\mathcal{P},k}(u,b)$ are defined by replacing maximization with minimization and changing the direction of inequality sign, and $\overline{\psi}_{\mathcal{P},k}(u,b)$ and $\underline{\psi}_{\mathcal{P},k}(u,b)$ are defined by replacing the function $g$ by $\psi$.

\subsubsection{Convex Restriction of the OPF Feasible Space}
The above upper and lower bounds on $g(x,u)$ over the region $\mathcal{P}(b)$ allow us to guarantee that condition \eqref{eqn:condition} for power flow feasibility holds.
Similarly, the bounds on $\psi(x,u)$ are used to ensure satisfaction of the inequality constraints \eqref{eqn:opf_ineqn}--\eqref{eqn:opf_linelim}.
The resulting convex restriction represents a convex inner approximation of the feasible space in the OPF problem.
This is proven by the following Theorem from \cite{Lee2018}, which we extend to include line flow limits.
\begin{theorem}{(Convex Restriction of Power Flow Feasibility Constraints)}
Given the operating point $u=(p_{\textrm{pv}},\, v_\textrm{g})$, there exists a solution for the state $x=(\theta_\textrm{ns},v_\textrm{pq})$ that satisfies the AC OPF constraints \eqref{eqn:DACPA}, \eqref{eqn:OPconstr} if there exist
$b=(\overline{\varphi},\,\overline{v}_\textrm{pq},\,-\underline{\varphi},\,-\underline{v}_\textrm{pq})$ and $(\overline{s}^\textrm{f},\overline{s}^\textrm{t})$ such that the following conditions hold:
\begin{equation}\begin{aligned}
-AJ_{f,0}^{-1}\tau(u)+K^+ \overline{g}_\mathcal{P}(u,b)+K^- \underline{g}_\mathcal{P}(u,b) &\leq b,\end{aligned}\label{eqn:conv_restr_eq}\end{equation}
\vspace{-10pt}
\begin{equation}\begin{aligned}
M_\textrm{ineq}^+\overline{\psi}_\mathcal{P}(u,b)+M_\textrm{ineq}^-\underline{\psi}_\mathcal{P}(u,b)&\leq \zeta(p_g^\textrm{max},q_g^\textrm{max}), \\
M_\textrm{ineq}^-\overline{\psi}_\mathcal{P}(u,b)+M_\textrm{ineq}^+\underline{\psi}_\mathcal{P}(u,b)&\geq\zeta(p_g^\textrm{min},q_g^\textrm{min}), \\
b\leq b^\textrm{max}, u^\textrm{min}\leq u&\leq u^\textrm{max}, \\
L_\textrm{line}^{\textrm{k},+}\overline{\psi}_\mathcal{P}(u,b)+L_\textrm{line}^{\textrm{k},-}\underline{\psi}_\mathcal{P}(u,b)&\leq \overline{s}^\textrm{k},\ \ \textrm{k}\in\{\textrm{f},\textrm{t}\},\\
-L_\textrm{line}^{\textrm{k},-}\overline{\psi}_\mathcal{P}(u,b)-L_\textrm{line}^{\textrm{k},+}\underline{\psi}_\mathcal{P}(u,b)&\leq \overline{s}^\textrm{k},\ \ \textrm{k}\in\{\textrm{f},\textrm{t}\},\\
&\hskip -8.2em \left(\overline{s}_p^{\textrm{k}}\right)^2+\left(\overline{s}_q^{\textrm{k}}\right)^2\leq \left(s^\textrm{max}\right)^2,\ \ \textrm{k}\in\{\textrm{f},\textrm{t}\},
\end{aligned} \label{eqn:conv_restr_ineq} \end{equation}
where $K=-AJ_{f,0}^{-1}M_\textrm{eq}$, and $\Lambda^+_{ij}=\max\{\Lambda_{ij},0\}$ and $\Lambda^-_{ij}=\min\{\Lambda_{ij},0\}$ for an arbitrary matrix $\Lambda$.
\label{thm_feasibility}
\end{theorem}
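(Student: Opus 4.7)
The plan is to combine Brouwer's Fixed Point Theorem with the componentwise concave-envelope bounds in two stages: first I would use \eqref{eqn:conv_restr_eq} to certify that the Newton-like map $F(x)=-J_{f,0}^{-1}(M_{\textrm{eq}}g(x,u)+\tau(u))$ sends the polytope $\mathcal{P}(b)$ into itself, which by Brouwer produces a power flow solution $x^\star\in\mathcal{P}(b)$; then I would use \eqref{eqn:conv_restr_ineq} together with $x^\star\in\mathcal{P}(b)$ to verify every operational constraint in \eqref{eqn:OPconstr}.

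For the first stage, applying $A$ to the fixed-point form \eqref{eq:fixed-point-form} shows that $F(\mathcal{P}(b))\subseteq\mathcal{P}(b)$ is equivalent to $Kg(x,u)-AJ_{f,0}^{-1}\tau(u)\leq b$ for every $x\in\mathcal{P}(b)$, which is precisely \eqref{eqn:condition}. The key step is to dominate the intractable quantity $\max_{x\in\mathcal{P}(b)}Kg(x,u)$ by the tractable surrogate $K^+\overline{g}_{\mathcal{P}}(u,b)+K^-\underline{g}_{\mathcal{P}}(u,b)$. Because $\underline{g}_{\mathcal{P}}(u,b)\leq g(x,u)\leq\overline{g}_{\mathcal{P}}(u,b)$ componentwise on $\mathcal{P}(b)$ by \eqref{eq:gbar}, the sign decomposition $K=K^++K^-$ yields this upper bound row by row. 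Hence \eqref{eqn:conv_restr_eq} enforces self-mapping, Brouwer's theorem delivers a fixed point $x^\star\in\mathcal{P}(b)$ of $F$, and by construction $x^\star$ satisfies the phase-adjusted power flow equations \eqref{eqn:pf_eqn}.

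For the second stage, I would read off each remaining constraint in \eqref{eqn:OPconstr} from \eqref{eqn:conv_restr_ineq}. The inclusion $x^\star\in\mathcal{P}(b)$ combined with $b\leq b^{\textrm{max}}$ directly yields the voltage and angle bounds \eqref{eqn:vlim}--\eqref{eqn:anglelim}, while $u^{\textrm{min}}\leq u\leq u^{\textrm{max}}$ encodes the PV-bus active-power and generator voltage-magnitude limits in \eqref{eqn:pgenlim}. For the slack active power and generator reactive power defined through \eqref{eqn:pf_ineqn}, the same positive/negative split applied to $M_{\textrm{ineq}}$ lets the two matrix inequalities in \eqref{eqn:conv_restr_ineq} sandwich $M_{\textrm{ineq}}\psi(x^\star,u)$ between $\zeta(p_g^{\textrm{min}},q_g^{\textrm{min}})$ and $\zeta(p_g^{\textrm{max}},q_g^{\textrm{max}})$. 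For the line limits \eqref{eqn:opf_linelim}, the two inequalities with swapped positive/negative parts of $L_{\textrm{line}}^{\textrm{k}}$ combine to give $\lvert L_{\textrm{line}}^{\textrm{k}}\psi(x^\star,u)\rvert\leq\overline{s}^{\textrm{k}}$ componentwise, and the SOC constraint on $\overline{s}^{\textrm{k}}$ then transfers to the true line flow by monotonicity of squaring nonnegative numbers.

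The main obstacle I anticipate is the careful bookkeeping required to show that the positive/negative matrix splits produce valid one-sided bounds simultaneously in both directions, and in particular that the envelopes used for $g(x,u)=\psi(v,\varphi)-J_{\psi,0}x$ in \eqref{eqn:conv_restr_eq} are consistent with the envelopes of $\psi(x,u)$ used in \eqref{eqn:conv_restr_ineq}; the offsets involving $v_{0,k}^{\mathrm{pq}}$, $v_{0,l}^{\mathrm{f,pq}}$, and $v_{0,l}^{\mathrm{t,pq}}$ in the envelope definitions are precisely what absorbs the affine $J_{\psi,0}x$ subtraction and must be tracked through every inequality. A secondary subtlety is justifying that the composite concave envelopes (e.g., $\langle\langle v^{\mathrm{f}}_l v^{\mathrm{t}}_l\rangle^Q\langle\cos\tilde{\varphi}_l\rangle^C\rangle^Q$) remain convex or concave in the correct direction so that their extrema on $\mathcal{X}_l$ are attained at vertices, which is what makes the finite-vertex maximization in \eqref{eq:gbar} both valid and tractable.
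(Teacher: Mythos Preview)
Your proposal is correct and follows essentially the same route as the paper's proof: the paper, too, shows that \eqref{eqn:conv_restr_eq} implies the self-mapping condition \eqref{eqn:condition} via the sign-split inequality $\max_{x\in\mathcal{P}(b)}Kg(x,u)\leq K^+\overline{g}_{\mathcal{P}}(u,b)+K^-\underline{g}_{\mathcal{P}}(u,b)$, invokes Brouwer's Fixed Point Theorem on the map \eqref{eq:fixed-point-form} to obtain $x^\star\in\mathcal{P}(b)$, and then observes that \eqref{eqn:conv_restr_ineq} guarantees all operational constraints hold for every $x\in\mathcal{P}(b)$ (hence for $x^\star$), deferring the envelope bookkeeping you flag to the companion reference. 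Your treatment is simply a more explicit unpacking of the same argument.
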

\begin{proof}
Condition \eqref{eqn:conv_restr_eq} is sufficient to satisfy condition \eqref{eqn:condition} in Brouwer's Fixed Point Theorem:
\begin{equation*}
\begin{aligned}
    &\max_{x\in\mathcal{P}(b)} Kg(x,u)-AJ_{f,0}^{-1}\tau(u) \\ 
    &\hskip 3em \leq K^+ \overline{g}_\mathcal{P}(u,b)+K^- \underline{g}_\mathcal{P}(u,b)-AJ_{f,0}^{-1}\tau(u)\leq b.
\end{aligned}
\end{equation*}
Then for all $x\in\mathcal{P}(b), \ -J_{f,0}^{-1}M_\textrm{eq}g(x,u)\in\mathcal{P}(b)$. By applying Brouwer's Fixed Point Theorem to \eqref{eq:fixed-point-form}, there exists a solution $x\in\mathcal{P}(b)$. Further, \eqref{eqn:conv_restr_ineq} ensures the operational constraints are satisfied for all $x\in\mathcal{P}(b)$. A more detailed proof is in \cite{Lee2018}.
\end{proof}
Note that the number of inequality constraints in \eqref{eqn:conv_restr_eq} and \eqref{eqn:conv_restr_ineq} is linearly proportional to the system size. Further, the convex restriction can be written analytically, and the only necessary computation is the inversion of the Jacobian matrix at the base operating point.

\subsection{Optimal Power Flow with Quadratic Convex Restriction}
We obtain an inner convex approximation of the AC OPF problem's feasible space by replacing the original AC OPF constraints \eqref{eqn:DACPA}, \eqref{eqn:OPconstr} with the convex restriction \eqref{eqn:conv_restr_eq}, \eqref{eqn:conv_restr_ineq}. The objective function requires further consideration. 

\subsubsection{Objective Function}
The objective is a function of the active power output from each generator ($p_{g,i}$). Since the active power generation at the slack (V$\theta$) bus is an implicit state variable, it is replaced by its over-estimator.
Given that the objective function is monotonically increasing with respect to the active power generation, the objective can be over-estimated by
\begin{equation}
    \overline{c}(u,b)=c_{\textrm{v}\theta}(\overline{p}_{\textrm{g,v}\theta})+\sum_{i=1}^{n_\textrm{pv}} c_{\textrm{pv},i} (p_{\textrm{pv},i})
    \label{eqn:cvxrs_cost}
\end{equation}
where $\overline{p}_{\textrm{g,v}\theta}$ is an over-estimator on the active power generated at the slack bus. This over-estimator is constrained by
\begin{equation}
C_{\textrm{v}\theta}\,\overline{p}_{\textrm{g,v}\theta}-p_{\textrm{d,v}\theta}\geq M_{\textrm{v}\theta}^+ \,\overline{\psi}_\mathcal{P}(u,b) + M_{\textrm{v}\theta}^- \, \underline{\psi}_\mathcal{P}(u,b),
\label{eqn:p_slack}
\end{equation}
where $M_{\textrm{v}\theta}\in\mathbb{R}^{1\times(2n_l+n_b)}$ is the row of $M_\textrm{ineq}$ that corresponds to the active power generation limit at the V$\theta$ bus.

\subsubsection{Optimal Power Flow with Convex Restriction}\label{subsubsec:cvxrs_opf}
The AC~OPF problem can be solved by minimizing the upper bound on the cost function subject to the convex restriction's conditions. The decision variables are the control variables $u$ and the state variable bounds $b$ as well as the intermediate variables that represent upper bounds on the line flows $(\overline{s}^\textrm{f},\overline{s}^\textrm{t})$ and slack bus active power generation $\overline{p}_{\textrm{g,v}\theta}$, respectively. The resulting optimization problem is

\begin{displaymath}
	\begin{aligned}
		\underset{u, b, \overline{s}^\textrm{f},\overline{s}^\textrm{t}, \overline{p}_{\textrm{g,v}\theta}}{\text{minimize}} \hskip 1em & \eqref{eqn:cvxrs_cost} &&\textrm{ objective function} \\
		\text{subject to} \hskip 1em &  \eqref{eqn:conv_restr_eq},\,  \eqref{eqn:conv_restr_ineq},\, \eqref{eqn:p_slack} &&\textrm{ convex restriction}.
	\end{aligned}
	\label{eqn:cvxopf}
\end{displaymath}

\begin{remark}
The solution obtained via convex restriction ($p_\textrm{g}^\textrm{cvxrs}$) is lower bounded by the globally optimal solution of the original AC OPF problem ($p_\textrm{g}^*$) and is upper bounded by the objective value at the base point ($p_{\textrm{g},0}$):
\begin{displaymath}
c(p_\textrm{g}^*)\leq c(p_\textrm{g}^\textrm{cvxrs})\leq c(p_{\textrm{g},0}).
\end{displaymath}
\label{remark:obj_bound}\end{remark}
\begin{remark}
The OPF with convex restriction (i.e., \eqref{eqn:conv_restr_eq}--\eqref{eqn:p_slack}) is a convex quadratically constrained quadratic program (convex QCQP) that can be solved with commercial solvers such as Mosek, CPLEX, and Gurobi. The number of quadratic constraints is bounded by $30n_l+4n_b+4n_g$. 
\label{remark:scalability}\end{remark}

\cref{remark:obj_bound} states that the solution of OPF with convex restriction has reduced or equal objective value relative to the base point. 
\cref{remark:scalability} indicates that the size of the resulting convex optimization problem increases linearly with the system size.

\section{Feasible Path Optimal Power Flow}
\label{sec:path}
We present an iterative algorithm to solve OPF with the convex restriction, while guaranteeing the existence of a feasible path to the new operating point.

\subsection{Definition of the Feasible Path}
The motivation for studying the feasible path is to bring the system from the current operating point to the desired operating point while guaranteeing steady-state stability, i.e., a trajectory which satisfies the AC power flow equations as well as the operational constraints.
This leads to the following definition of a feasible path.
\begin{definition}
A \textit{feasible path} between two control set points $u^{(0)}$ and $u^{(N)}$ is a set of control variables that forms a continuous trajectory connecting the two set points such that there exists state variables $x$ that satisfy the AC OPF constraints in \eqref{eqn:DACPA}, \eqref{eqn:OPconstr} at every point along the trajectory.
\end{definition}
In particular, the feasible path will be described by a sequence of control actions $u^{(k)}, i=0,\ldots,N$ where
\begin{displaymath}
    \mathcal{U}^\textrm{path}=\{\alpha u^{(k)}+(1-\alpha)u^{(k+1)}\mid \alpha\in[0,1],\, k=0,\ldots,N-1\}.
\end{displaymath} 
\subsection{Feasible Path Identification Algorithm}
The convex restriction provides an \emph{inner approximation} of the power flow feasibility set that is a convex set. By the definition of a convex set, all points on the line connecting two operating points $u^{(k)}$ and $u^{(k+1)}$ within the convex restriction are guaranteed to be feasible. That is, for $\alpha\in [0,1]$,
\begin{equation}
    \alpha u^{(k)}+(1-\alpha)u^{(k+1)}\in\mathcal{U}^\textrm{cvxrstr}_{(k)}.
\label{eqn:convexity_def}\end{equation}
Here, $\mathcal{U}^\textrm{cvxrstr}_{(k)}$ denotes the convex restriction \eqref{eqn:conv_restr_eq}, \eqref{eqn:conv_restr_ineq}, constructed with the base point at $u^{(k)}$. 
By leveraging this property of convexity, we propose to use \emph{sequential convex restrictions} to identify a feasible path.
The algorithm based on sequential convex restrictions is described in \cref{alg:SCRS}. Given a current set of control variables $u^{(k)}$, each iteration of the algorithm (i) solves the power flow equations to obtain the base point $(x^{(k)}, u^{(k)})$, (ii) constructs the convex restriction at this base point, and (iii) solves a convex restriction OPF to obtain a new set of control variables $u^{(k+1)}$. The output of the algorithm is a sequence of control set points $u^{(k)}, k=0,\ldots,N$, that forms a piece-wise linear feasible path between the initial operating point $u^{(0)}$ and $u^{(N)}$.

\begin{algorithm}
 \begin{algorithmic}[1]
  \STATE \textit{Initialize}: Set $u^{(0)}$ and $x^{(0)}$ to the initial operating point.
  \WHILE {$\lVert u^{(k+1)}-u^{(k)}\rVert_2>\varepsilon$} 
  \STATE Solve power flow given $u^{(k)}$ to obtain $x^{(k)}$.
  \STATE Set $x_0=x^{(k)}$ and compute the power flow Jacobian at the base point ($J_{f,0}$).
  \STATE Construct Convex Restriction ($\mathcal{U}_{(k)}^\textrm{cvxrstr}$) with \eqref{eqn:conv_restr_eq}, \eqref{eqn:conv_restr_ineq}.
  \STATE Solve
  \vspace{-5pt}
  \begin{equation}
  u^{(k+1)}=\argmin_{u\in\mathcal{U}_{(k)}^\textrm{cvxrstr}} f_0(u,b).
  \label{eqn:alg_obj}\end{equation}
  \vspace{-10pt}
  \STATE $k:=k+1$.
  \ENDWHILE
 \RETURN $u^{(1)},\ldots,u^{(N)}$.
 \end{algorithmic}
 \caption{Feasible Path Identification Algorithm with Sequential Convex Restriction}
 \label{alg:SCRS}
 \end{algorithm}

Due to the property of convexity explained in \eqref{eqn:convexity_def}, the output of the algorithm provides a feasible path for the OPF problem in \eqref{eqn:OPF}.
\begin{corollary}
The piece-wise linear path from $u^{(0)}$ to $u^{(N)}$,
\begin{displaymath}
    \mathcal{U}^\textrm{path}=\{\alpha u^{(k)}+(1-\alpha)u^{(k+1)}\mid \alpha\in[0,1],\, k=0,\ldots,N-1\},
\end{displaymath}
provided by Algorithm \ref{alg:SCRS} is a feasible path with respect to the OPF constraints in \eqref{eqn:DACPA} and \eqref{eqn:OPconstr}.
\end{corollary}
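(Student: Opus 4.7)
The plan is to string together three ingredients already in place: (i) Theorem~\ref{thm_feasibility}, which certifies that every point in $\mathcal{U}_{(k)}^{\textrm{cvxrstr}}$ satisfies the full AC OPF constraints \eqref{eqn:DACPA}--\eqref{eqn:OPconstr}; (ii) the convexity of $\mathcal{U}_{(k)}^{\textrm{cvxrstr}}$ stated in \eqref{eqn:convexity_def}; and (iii) the fact that Algorithm~\ref{alg:SCRS} places both endpoints of each linear segment inside the \emph{same} convex restriction.

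First I would fix an arbitrary iteration index $k\in\{0,\ldots,N-1\}$ and argue that both $u^{(k)}$ and $u^{(k+1)}$ belong to $\mathcal{U}_{(k)}^{\textrm{cvxrstr}}$. The membership $u^{(k+1)}\in\mathcal{U}_{(k)}^{\textrm{cvxrstr}}$ is immediate from line~6 of the algorithm, since $u^{(k+1)}$ is by definition the argmin over $\mathcal{U}_{(k)}^{\textrm{cvxrstr}}$. The membership $u^{(k)}\in\mathcal{U}_{(k)}^{\textrm{cvxrstr}}$ requires a short verification: the convex restriction is built around the base point $(x^{(k)},u^{(k)})$, which is itself feasible, so choosing $b$ equal to $b^{(k)}=(E^T_{\textrm{ns}}\theta^{(k)}_{\textrm{ns}},v^{(k)}_{\textrm{pq}},-E^T_{\textrm{ns}}\theta^{(k)}_{\textrm{ns}},-v^{(k)}_{\textrm{pq}})$ together with $\overline{s}^{\textrm{f}},\overline{s}^{\textrm{t}}$ set to the actual line flows at the base point makes $u^{(k)}$ a feasible candidate of the restriction's constraint system \eqref{eqn:conv_restr_eq}--\eqref{eqn:conv_restr_ineq}.

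Next I would invoke the convexity of $\mathcal{U}_{(k)}^{\textrm{cvxrstr}}$, so that for every $\alpha\in[0,1]$ the convex combination $\alpha u^{(k)}+(1-\alpha)u^{(k+1)}$ lies in $\mathcal{U}_{(k)}^{\textrm{cvxrstr}}$. Applying Theorem~\ref{thm_feasibility} then guarantees that for every such $\alpha$ there exists a state vector $x$ satisfying the AC power flow equations \eqref{eqn:DACPA} and the operational constraints \eqref{eqn:OPconstr}. Finally I would take the union over $k=0,\ldots,N-1$ to conclude that
\[
\mathcal{U}^{\textrm{path}}=\bigcup_{k=0}^{N-1}\{\alpha u^{(k)}+(1-\alpha)u^{(k+1)}\mid\alpha\in[0,1]\}
\]
is a continuous trajectory connecting $u^{(0)}$ and $u^{(N)}$ along which every point admits a feasible AC state, which is exactly the definition of a feasible path.

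The only genuine obstacle is the verification that the base point $u^{(k)}$ itself lies in $\mathcal{U}_{(k)}^{\textrm{cvxrstr}}$, because the restriction is a condition on both $u$ and auxiliary variables $(b,\overline{s}^{\textrm{f}},\overline{s}^{\textrm{t}},\overline{p}_{\textrm{g},\textrm{v}\theta})$ rather than on $u$ alone; the rest of the argument is a one-line application of convexity plus Theorem~\ref{thm_feasibility}. I would handle this either by appealing to the construction of the envelopes (which are tight at the base point, so the restriction's inequalities are tight there with the natural choice of auxiliary variables) or by citing the analogous observation already made in \cite{Lee2018}.
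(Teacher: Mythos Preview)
Your proposal is correct and follows the same approach as the paper: the paper's justification for the corollary is simply the sentence preceding it, which invokes the convexity property \eqref{eqn:convexity_def} together with Theorem~\ref{thm_feasibility}, and your three ingredients (i)--(iii) are exactly this argument made explicit. You are in fact more careful than the paper about the one nontrivial detail---verifying that the base point $u^{(k)}$ itself lies in $\mathcal{U}_{(k)}^{\textrm{cvxrstr}}$---which the paper leaves implicit (it is also implied by Remark~\ref{remark:obj_bound} and the tightness of the envelopes at the base point, as you note).
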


The objective function of the algorithm, $f_0(u,b)$, can be designed to either directly reduce the generation cost or aim to get close to a known desirable operating point.

\subsection{Operational Scenarios for Feasible Path OPF}
Depending on the problem setting, we might want to consider different objective functions $f_0$ in \eqref{eqn:alg_obj}. We provide two examples.

\subsubsection{Optimal Power Flow with Feasible Path Guarantees}
Given the current, sub-optimal operating point $(x_0, u_0)$, we want to find a lower-cost operating point $(x^*, u^*)$ while providing a certifiably feasible path between the two points.
In this formulation, the OPF problem is directly solved by setting the objective to be the cost of generation:
\begin{equation}
    f_0(u,b)=\bar{c}(u,b),
    \label{eqn:pathguarantees}
\end{equation}
where $\bar{c}(u,b)$ is defined in \eqref{eqn:cvxrs_cost}. 
The algorithm solves the OPF with convex restriction as discussed in \cref{subsubsec:cvxrs_opf} and iterates by setting the solution to the base point and repeating the process of solving OPF with convex restriction.

\subsubsection{Feasible Path Identification for Known Operating Points}
In an alternative scenario, we are provided a known, desired operating point $(p_\textrm{pv}^*,v_\textrm{g}^*)$ and seek a sequence of feasible control actions which bring the system towards the desired point. The objective function here  minimizes the square of the Euclidean distance from the desired generation set point,
\begin{equation}
    f_0(u,b)=\lambda\lVert p_\textrm{pv}-p_\textrm{pv}^* \rVert_2^2+\lVert v_\textrm{g}-v_\textrm{g}^* \rVert_2^2,
    \label{eqn:FPI_obj}
\end{equation}
where $\lambda$ is a relative weighting of the differences in generator power injections and voltage magnitudes. The convergence of the algorithm depends on the weight $\lambda$, which will be investigated further in the numerical studies section.

\subsection{Convergence of the Feasible Path OPF}
The sequential convex restriction may not always converge to the optimal solution. We provide a few scenarios in which the algorithm may not arrive at the desired operating point.
\begin{itemize}
    \item If the initial and the optimal operating points belong to separate, disconnected regions of the feasible space, there is no feasible path between the two points. The algorithm's final point will reside in the region with the initial point. 
    \item The algorithm could converge to a point at the nonconvex boundary of the feasible set where all cost-descending directions are infeasible.
    \item The dispatch point could get too close to the voltage collapse point. As it gets close to the voltage collapse point, the power flow Jacobian, $J_{f,0}$, will become singular.
\end{itemize}

The next section provides quantitative experiments to show the convergence of the algorithm on standard IEEE test cases.

\section{Numerical Studies}
\label{sec:experiments}
This section computationally demonstrates our algorithm.
Two illustrative examples are presented to visualize how the algorithm finds a feasible path to a desired point and to study the convergence characteristics. Extensive numerical studies show how the algorithm improves the initial operating point.

\subsection{Implementation}
The studies were conducted using the test cases from PGLib v19.01 \cite{pglib} with sizes up to the 588 buses. Computations were done using a laptop with a 3.3 GHz Intel Core i7 processor and 16 GB of RAM. The code implementation uses JuMP/Julia~\cite{jump}. MOSEK was used to solve the convex QCQPs associated with the convex restrictions. M{\sc atpower}'s interior point method was used to solve the AC OPF problems to obtain desired operating points~\cite{zimmerman11}. \cref{alg:SCRS} was implemented with $\varepsilon=0.01$, where the power flow in each iteration (step~3) was solved using the Newton-Raphson method.

\subsection{Illustrative Example using a Two-Bus System}

This section presents a two-bus system composed of a controllable load connected to the slack bus via a line with impedance $z=j1$. In contrast to the normal practice for OPF problems, we model the active and reactive power at the load bus as control variables for the sake of illustration. Considering a constant voltage magnitude of 1 p.u. at the slack bus, the power flow equations are
\begin{equation}\begin{aligned}
p&=v\sin\varphi \\
q&=-v\cos\varphi+v^2.
\end{aligned}\label{eqn:2bus}\end{equation}
The control variables are the active and reactive power at the load bus ($p\in\mathbb{R}$ and $q\in\mathbb{R}$), and the state variables are voltage magnitude and the phase angle at the load bus ($v$ and $\varphi$). In addition to the power flow equations, the voltage magnitude is constrained as $0.9\leq v\leq 1.1$.

\begin{figure}[!t]
	\centering
	\includegraphics[width=2.6in]{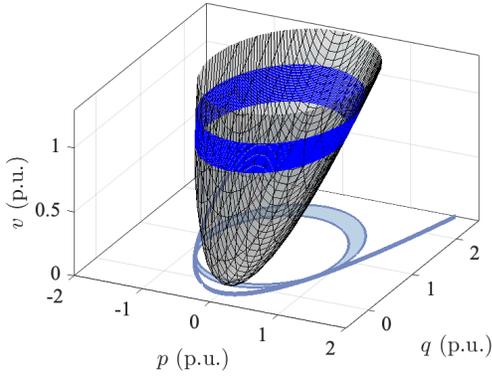}
	\caption{The nonlinear manifold created by the AC power flow equations for the two-bus system. The set of states that satisfy the voltage magnitude limits is colored in blue. The projection of the manifold and the feasible region onto the space of active and reactive power are shown on the $p$ and $q$ axes.}
	\label{fig:twobus_3d}
\end{figure}

\begin{figure}[!t]
	\centering
	\includegraphics[width=2.8in]{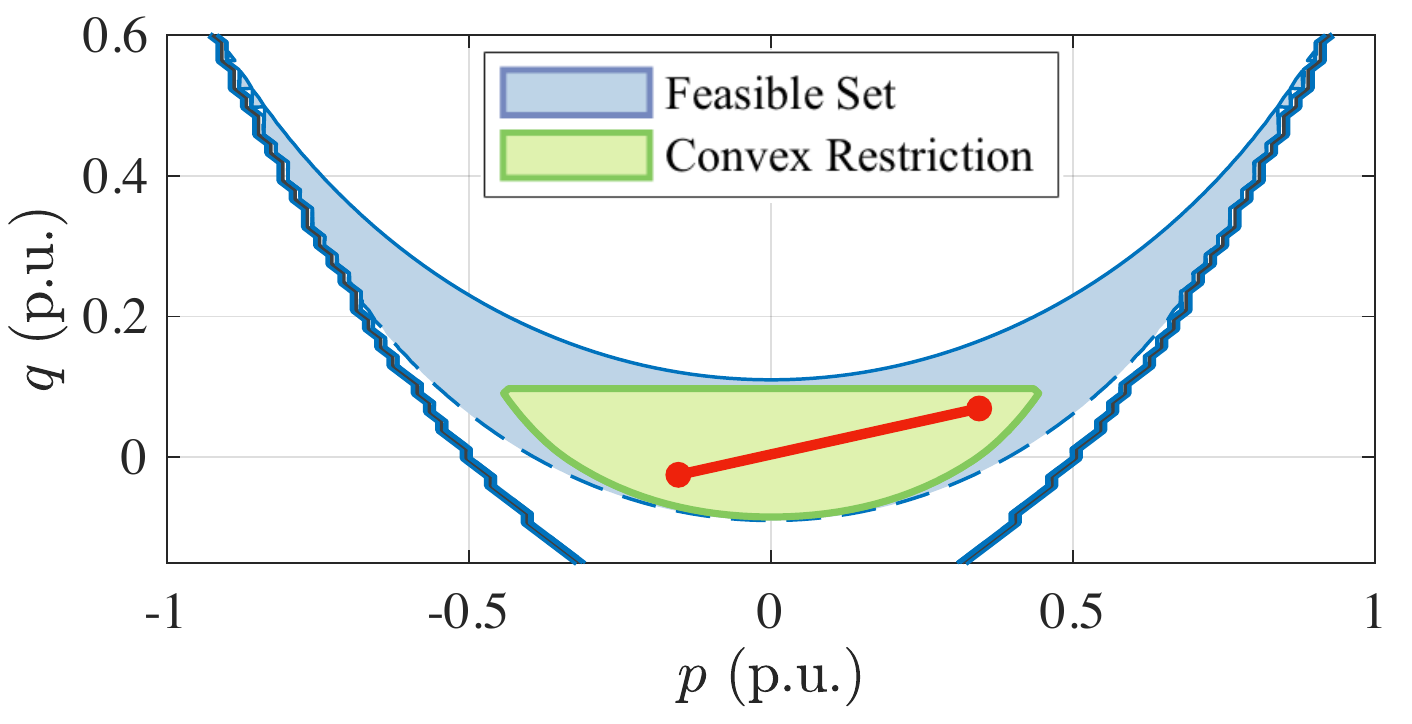}
	\caption{The feasible set and convex restriction for the control variables in the two-bus system. Any piece-wise linear path between two points in the convex restriction provides a feasible path. One such path is shown in red.}
	\label{fig:twobus_2d}
\end{figure}

\begin{figure*}[!h]
	\centering
	\includegraphics[width=5.5in]{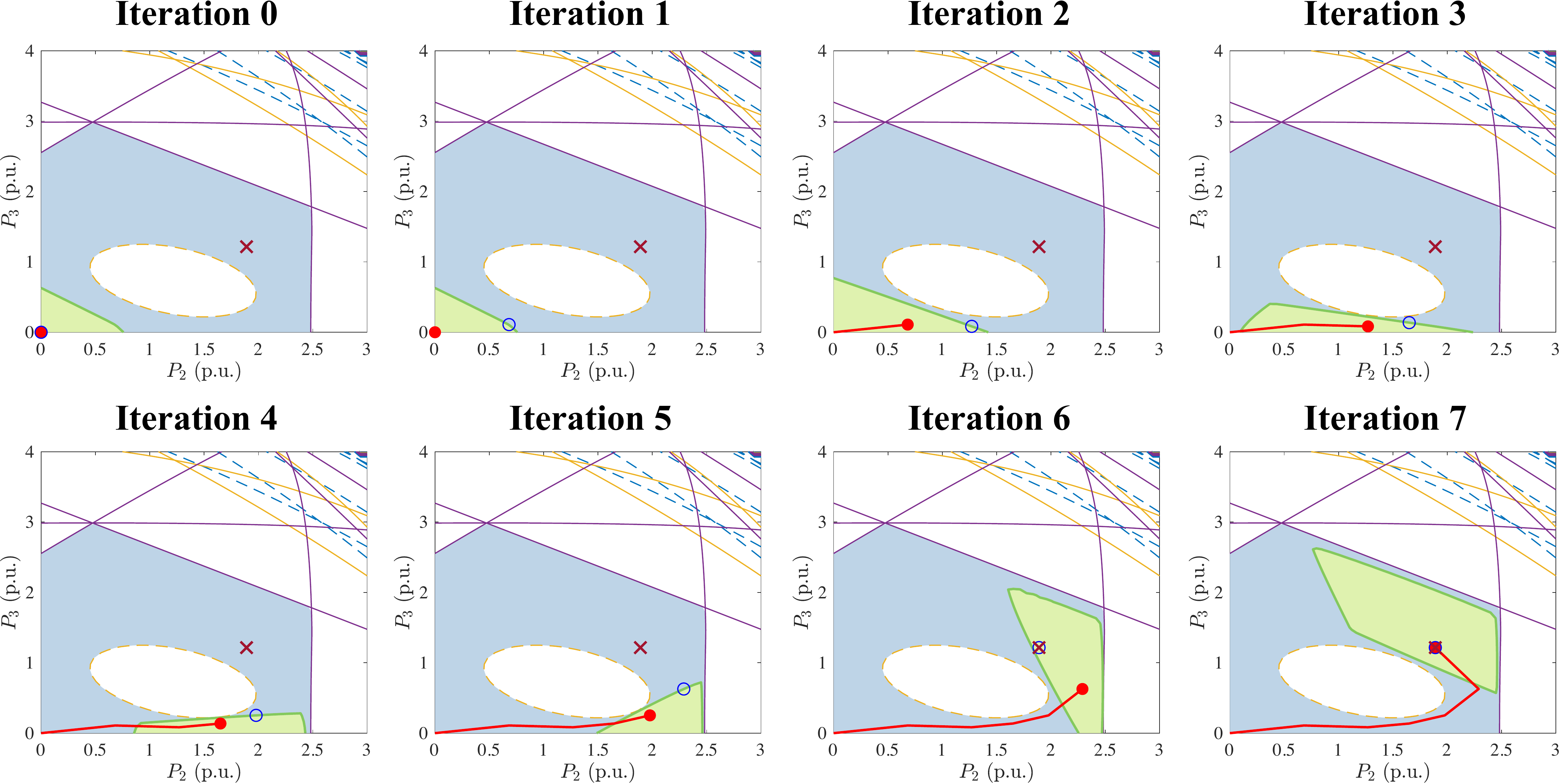}
	\caption{Illustration of sequential convex restriction in the 9-bus system. The feasibility space is shown in blue, and the convex restriction is shown in green. The desired operating point is marked by the ${\color{purple}\times}$ symbol. The red point ${\color{red}\bullet}$ indicates the base operating point where the convex restriction was constructed, and the blue circle ${\color{blue}\circ}$ shows the optimal operating point within the convex restriction. The red line shows the feasible path.}
	\label{fig_FPP_9bus}
	\vspace*{-1em}
\end{figure*}

Figure \ref{fig:twobus_3d} shows the manifold created by the power flow equations and the manifold's projection onto the space of control variables. Any $p$, $q$, and $v$ on the manifold has a phase angle $\varphi$ that satisfies the power flow equation in \eqref{eqn:2bus}. 
If the voltage magnitude limits are not present, the solvability condition over the active and reactive power at the load bus is known to be $p^2-q\leq1/4$. 

This condition corresponds to the solvability boundary given by the thick blue line in Figure~\ref{fig:twobus_2d}. This figure shows the projection of the manifold onto the control variables, focusing on the region with low reactive power consumption. With the consideration of voltage magnitude limits visualized by the thin solid and dashed blue lines, the projection is represented in blue. The green region shows the convex restriction condition constructed around $p=0$ and $q=0$. Due to the properties of convex sets, the linear paths between any two points within the convex restriction are guaranteed to be feasible. The red line in Figure \ref{fig:twobus_2d} gives an example of one such feasible path.

\subsection{Illustrative Example: Feasible Path for a 9-Bus System}
For the second illustrative example, we consider the 9-bus system from~\cite{chow1982time}. In this experiment, the voltage magnitudes at the generators were fixed at 1 p.u. and the generators' reactive power limits were reduced from 300 MVAr to 100 MVAr. Figure \ref{fig_FPP_9bus} shows the sequential convex restriction applied to the 9-bus system, with the changes in the control variable setpoints plotted as the algorithm progresses. The sequential convex restriction is minimizing the square of the distance to the desired operating point (marked by the ${\color{purple}\times}$ symbol) at each iteration, using the objective function \eqref{eqn:FPI_obj} with $\lambda=1$. The algorithm converges to the desired operating point in 7 iterations. The figure shows that the piece-wise linear path goes around the infeasible operating region (plotted in white) and arrives at the desired operating point without violating any OPF constraints.

\begin{figure}[!b]
	\centering
	\includegraphics[width=3in]{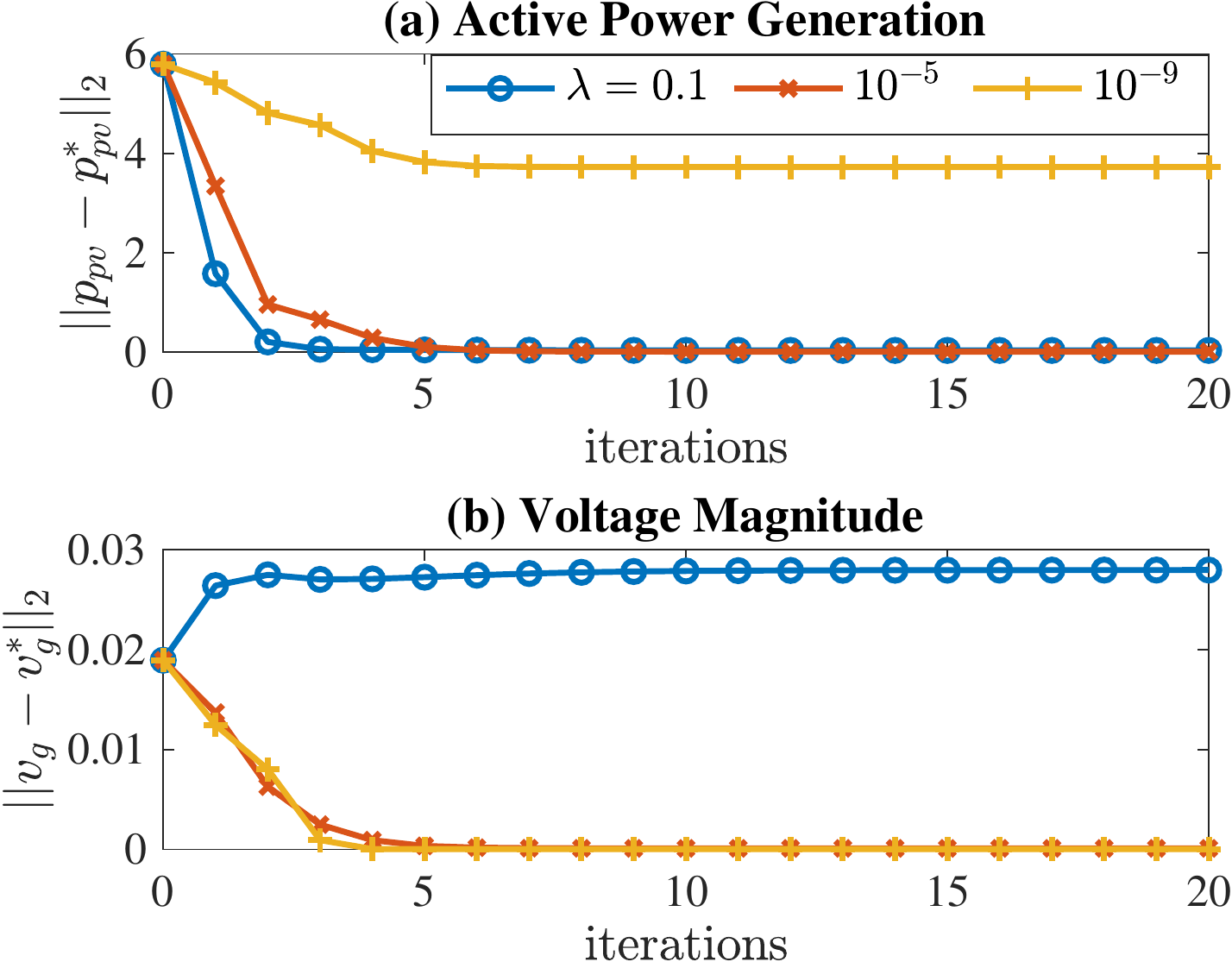}
	\caption{Convergence of the feasible path to the identified point for varying values of $\lambda$ for the 39-bus system.
}
	\label{fig:distmin_plot}
\end{figure}

\subsection{Convergence of Feasible Path Identification to a Specified Operating Point}
In this study, we investigate the convergence of the feasible path algorithm in an example based on the IEEE 39-bus system \cite{pai2012energy}.
The desired operating point was specified to be the globally optimal AC OPF solution. The initial operating point was determined by solving OPF with linear uniform generation cost (i.e., $c(p_g)=\sum_i p_{g,i}$) using M{\sc atpower}. The distance between the current and the desired operating point was minimized with different values of the parameter $\lambda$ in the objective in \eqref{eqn:FPI_obj}, 
which determines the trade-off between convergence for the active power and voltage magnitudes. 

Figure \ref{fig:distmin_plot} shows the convergence of the active power generation and voltage magnitudes to the desired operating point. For large enough values of $\lambda$, the active power outputs (at the non-slack generators) converge to those of the desired operating point. However, the voltage magnitudes may converge to a different, sub-optimal power flow solution.
Similarly, if $\lambda$ is set too low, the voltage magnitudes may converge to the desired operating point, while the active power set points do not. For intermediate values of $\lambda$, both active power and voltage magnitudes converge to the desired point.

The main takeaway from this result is that the convergence of the algorithm is path-dependent, i.e., there are cases where the sequential convex restriction gets trapped in a sub-optimal point. This issue could be mitigated by appropriate tuning of the objective function.

\subsection{Optimal Power Flow with Feasible Path Guarantees}
To show how the algorithm improves upon an initial, sub-optimal point (using the objective \eqref{eqn:pathguarantees} based on generation cost), we run our algorithm on different PGLib test cases, considering both the typical and congested operating conditions.
The initial operating point was obtained by solving the OPF problem with a linear uniform generation cost ($\sum_i p_{g,i}$) in M{\sc atpower}. These solutions are far from the optima of the OPF problems with their original objective functions. Thus, this experimental setup adequately exercises our sequential convex restriction algorithm.

\cref{tab:result} summarizes the numerical studies where the cost of generation was minimized at each iteration of the algorithm. 
The costs after the first and last iteration of the sequential convex restriction are shown in the fourth and fifth columns of \cref{tab:result}. The optimality gap at the $i$th iteration is defined as 
\begin{equation}
    \text{Optimality Gap}=\textstyle\frac{c(p_\textrm{g}^{(i)})-c(p_\textrm{g}^*)} {c(p_\textrm{g}^*)}.
\end{equation}
Here, $c(p_\textrm{g})$ is the objective function \eqref{eqn:cost}, $p_\textrm{g}^{(i)}$ is the power generation at the $i$th iteration and $p_\textrm{g}^*$ is the optimal set point from M{\sc atpower}. 
We observe that the OPF with convex restriction is able to significantly improve the operating point from the initial point, even if it does not reach the same solution as M{\sc atpower}. The algorithm converges within 5 iterations for all cases.
For all of the considered optimal power flow problems, the solutions obtained from the first step of the sequential convex restriction algorithm have optimality gaps less than 20\%, which suggests that the convex restriction covers a significant portion of the feasible space.
The average solver time per iteration is also shown in the last two columns. 

Note that the sequential convex restriction encountered numerical issues for three test cases (the 89- and 240-bus systems and 588-bus system with the congested operating conditions) which are omitted from \cref{tab:result}. 
These issues are due to the line flow constraints, which add quadratic limits on the existing quadratic envelopes. This introduces higher-order polynomial constraints (expressed in terms of two quadratic constraints), which can be numerically challenging. 

\begin{table*}[!htbp]
\centering
\caption{Optimality Gaps and Runtimes of OPF with Convex Restriction for Selected PGLib Test Cases}
\begin{tabular}{ |c||c|c|c|c|c|c|c|c|}
\hline
& \multicolumn{4}{c|}{Objective (\$/h)}  &  &  \multicolumn{2}{c|}{Optimality Gap (\%)} & \\
Test Case & Initial & M{\sc atpower} &  \shortstack{CVXRS \\ $1^\textrm{st}$ iter}  &\shortstack{CVXRS \\ last iter} & \shortstack{Number of \\ iterations} &  $1^\textrm{st}$ iter&  last iter & \shortstack{Solver Time \\ (seconds)} \\ \hline \hline
\multicolumn{9}{|c|}{Typical Operating Conditions (TYP)} \\ \hline
pglib\_opf\_case3\_lmbd & 6089.54 & 5812.64 & 5986.53 & 5813.54 & 5 & 2.99 & 0.02 & 0.01 \\ \hline
pglib\_opf\_case5\_pjm & 27356.2 & 17551.9 & 17839 & 17578.8 & 4 & 1.64 & 0.15 & 0.03 \\ \hline
pglib\_opf\_case14\_ieee & 7008.23 & 6291.28 & 6291.35 & 6291.29 & 2 & 0 & 0 & 0.08 \\ \hline
pglib\_opf\_case24\_ieee\_rts & 87065.8 & 63352.2 & 63393.8 & 63361.5 & 4 & 0.07 & 0.01 & 0.19 \\ \hline
pglib\_opf\_case30\_ieee & 12308.3 & 11974.5 & 11981.1 & 11976.8 & 2 & 0.06 & 0.02 & 0.23 \\ \hline
pglib\_opf\_case39\_epri & 152592 & 142980 & 144525 & 143010 & 4 & 1.08 & 0.02 & 0.41 \\ \hline
pglib\_opf\_case57\_ieee & 46216.5 & 39323.4 & 44000.3 & 42494 & 5 & 11.89 & 8.06 & 1.04 \\ \hline
pglib\_opf\_case73\_ieee\_rts & 262108 & 189764 & 189908 & 189789 & 5 & 0.08 & 0.01 & 1.73 \\ \hline
pglib\_opf\_case118\_ieee & 145657 & 115804 & 117068 & 116071 & 5 & 1.09 & 0.23 & 3.55 \\ \hline
pglib\_opf\_case162\_ieee\_dtc & 129083 & 126154 & 127622 & 127612 & 3 & 1.16 & 1.16 & 19.91 \\ \hline
pglib\_opf\_case179\_goc & 905329 & 828404 & 893016 & 883301 & 5 & 7.8 & 6.63 & 11.47 \\ \hline
pglib\_opf\_case200\_tamu & 37398.7 & 34730.7 & 37138.3 & 35895.9 & 5 & 6.93 & 3.35 & 10.21 \\ \hline
pglib\_opf\_case300\_ieee & 850620 & 664220 & 734711 & 684909 & 5 & 10.61 & 3.11 & 55 \\ \hline
pglib\_opf\_case588\_sdet & 476950 & 381555 & 447566 & 428569 & 5 & 17.3 & 12.32 & 219.66 \\ \hline
\multicolumn{9}{|c|}{Congested Operating Conditions (API)} \\ \hline
pglib\_opf\_case3\_lmbd\_\_api & 11390.1 & 11242.1 & 11320.7 & 11242.4 & 4 & 0.7 & 0 & 0.01 \\ \hline
pglib\_opf\_case5\_pjm\_\_api & 83270.4 & 76377.4 & 76752 & 76433.2 & 4 & 0.49 & 0.07 & 0.06 \\ \hline
pglib\_opf\_case14\_ieee\_\_api & 13604.4 & 13310.7 & 13463.6 & 13424.1 & 3 & 1.15 & 0.85 & 0.1 \\ \hline
pglib\_opf\_case24\_ieee\_rts\_\_api & 282746 & 134948 & 241878 & 172528 & 5 & 79.24 & 27.85 & 0.31 \\ \hline
pglib\_opf\_case30\_ieee\_\_api & 24038.1 & 24032.1 & 24036.1 & 24036.1 & 1 & 0.02 & 0.02 & 0.34 \\ \hline
pglib\_opf\_case39\_epri\_\_api & 259792 & 257214 & 259405 & 258749 & 5 & 0.85 & 0.6 & 0.59 \\ \hline
pglib\_opf\_case57\_ieee\_\_api & 61522.6 & 59273.6 & 60600.3 & 60385.8 & 4 & 2.24 & 1.88 & 1.1 \\ \hline
pglib\_opf\_case118\_ieee\_\_api & 327478 & 316424 & 323357 & 318211 & 5 & 2.19 & 0.56 & 3.77 \\ \hline
pglib\_opf\_case162\_ieee\_dtc\_\_api & 144271 & 143514 & 144259 & 144259 & 1 & 0.52 & 0.52 & 11.79 \\ \hline
pglib\_opf\_case179\_goc\_\_api & 2456980 & 1971220 & 2381450 & 2330960 & 2 & 20.81 & 18.25 & 15.75 \\ \hline
pglib\_opf\_case200\_tamu\_\_api & 53307.9 & 44867.2 & 52408.5 & 51493.9 & 2 & 16.81 & 14.77 & 11.3 \\ \hline
pglib\_opf\_case300\_ieee\_\_api & 967348 & 775490 & 879185 & 841581 & 2 & 13.37 & 8.52 & 59.45 \\ \hline
\end{tabular}
\label{tab:result}
\vspace*{-1em}
\end{table*}

\section{Conclusion and Future Work}
\label{sec:conclusion}
This paper proposed a sequential convex restriction algorithm to obtain a \emph{feasible path} from an initial, feasible operating point to an improved operating point. The feasible path is a trajectory of dispatch points for which all points are AC power flow feasible and satisfy all operational constraints. The algorithm relies on solving a sequence of OPF problems that are formulated using convex restrictions, which are convex inner approximations of the OPF feasible space. 
The case studies demonstrate that the sequential convex restriction algorithm converges to a high-quality solution while generating feasible control actions, and is scalable to large systems.

One of the challenges we aim to address in future work is closing the optimality gaps when the sequential convex restriction algorithm is used to solve OPF problems.
Recent work in \cite{lee2019sequential} suggests that one of the necessary conditions for convergence of the sequential convex restriction algorithm is to have a matching Jacobian between the over- and under-estimator at the base point.
Constructing convex restrictions using alternative representations of the power flow equations may be also helpful for further closing the optimality gap.

There are other natural extensions to this work.
Our current approach requires a feasible initial operating point since a feasible path by its definition requires that all points on the path are feasible, including the initial point.
The method could be extended to allow for an infeasible initial point by relaxing the violated operational limits with slack variables.
The objective of the sequential convex restriction could then be designed to penalize violations of the operational limits such that the operating point finds a path back to a feasible dispatch point. This would be particularly useful to mitigate the impacts of contingencies that lead to an infeasible dispatch condition. 

Additionally, the current approach only considers steady-state security limits and does not consider system dynamics. We believe that it is possible to extend the method to consider system dynamics and plan to address this in the future.

We aim to further extend the approach by developing convex restrictions that also ensure N-1 security. Given multiple convex restrictions constructed for individual contingencies, a convex restriction that enforces the N-1 contingency criteria can be obtained by simply intersecting these restrictions due to the convexity of these sets.

We also plan to better understand cases where the method does not find a feasible path and consider additional control actions (e.g., changing the setpoints for FACTS devices and controllable transformers) that may help in these cases. 

Finally, our future work will analyze the number of control actions that are taken when following the feasible path, which is an important consideration for practical applications.

\appendix
Let $E_f\in\mathbb{R}^{n_b\times n_l}$ and $E_t\in\mathbb{R}^{n_b\times n_l}$ be the connection matrix for \textit{from} and \textit{to} buses. The $(k,l)^\mathrm{th}$ element of $E_f$ and the $(m,l)^\mathrm{th}$ element of $E_t$ are equal to 1 for each transmission line $l$, where the line $l$ connects the ``from" bus $k$ to ``to" bus $m$, and zero otherwise, and $E=E_f-E_t$. 
The matrices $Y_\textrm{ff}$, $Y_\textrm{tf}$, $Y_\textrm{ft}$ and $Y_\textrm{tt}$ are diagonal with its elements,
\begin{displaymath} \begin{aligned}
Y_{\textrm{ff},\mathit{kk}}&=\left(y_k+j\frac{b^c_k}{2}\right)\left(\frac{1}{\tau_k}\right)^2, \hskip 1em
Y_{\textrm{tt},\mathit{kk}}=y_k+j\frac{b^c_k}{2}, \\
Y_{\textrm{ft},\mathit{kk}}&=-y_k\frac{1}{\tau_ke^{-j\theta^\textrm{shift}_k}}, \hskip 4em
Y_{\textrm{tf},\mathit{kk}}=-y_k\frac{1}{\tau_ke^{j\theta^\textrm{shift}_k}}.
\end{aligned}\end{displaymath}
where $Y_{\textrm{ff},\mathit{kk}}$ represents $k^\textrm{th}$ row and $k^\textrm{th}$ column of the diagonal matrix $Y_\textrm{ff}$. The transformer tap ratio, phase shift angle, and line charging susceptance are $\tau$, $\theta^\textrm{shift}$, $b^c$, respectively. These values are modeled as specified constants while solving the OPF problem.
The phase adjusted admittance matrices $\widehat{Y}_\textrm{ft}\in\mathbb{C}^{n_l\times n_l}$ and $\widehat{Y}_\textrm{tf}\in\mathbb{C}^{n_l\times n_l}$ are diagonal matrices with
\begin{displaymath}
\widehat{Y}_\mathrm{ft}=Y_\mathrm{ft}\mathbf{diag}\left(e^{-j\varphi_0}\right), \hskip1em
\widehat{Y}_\mathrm{tf}=Y_\mathrm{tf}\mathbf{diag}\left(e^{j\varphi_0}\right).
\end{displaymath}
where each diagonal element of $\widehat{Y}_\textrm{ft}$ is an adjustment of $Y_\textrm{ft}$ by angle $\varphi_0$. The complex matrix $Y_{sh}$ is a diagonal matrix with the diagonal elements being the shunt admittances at the corresponding buses. Then,
\begin{displaymath} \begin{aligned}
\widehat{Y}^\textrm{c}&=E_\mathrm{f}\widehat{Y}_\mathrm{ft}+E_\mathrm{t}\widehat{Y}_\mathrm{tf},\ \ \widehat{Y}^\textrm{s}=E_\mathrm{f}\widehat{Y}_\mathrm{ft}-E_\mathrm{t}\widehat{Y}_\mathrm{tf}, \\
Y^\mathrm{d}&=E_\mathrm{f}Y_\mathrm{ff}E_\mathrm{f}^T+E_\mathrm{t}Y_\mathrm{tt}E_\mathrm{t}^T+Y_\mathrm{sh}.
\end{aligned} \end{displaymath}
Replacing $Y$ by $G$ or $B$ yields the real and imaginary parts of the $Y$ matrix, which are used in \eqref{eqn:pf_vec}, \eqref{eqn:lineflow_f}, and \eqref{eqn:lineflow_t}.

\ifCLASSOPTIONcaptionsoff
\newpage
\fi

\bibliographystyle{IEEEtran}
\bibliography{references}

\begin{thebibliography}{10}
\providecommand{\url}[1]{#1}
\csname url@samestyle\endcsname
\providecommand{\newblock}{\relax}
\providecommand{\bibinfo}[2]{#2}
\providecommand{\BIBentrySTDinterwordspacing}{\spaceskip=0pt\relax}
\providecommand{\BIBentryALTinterwordstretchfactor}{4}
\providecommand{\BIBentryALTinterwordspacing}{\spaceskip=\fontdimen2\font plus
\BIBentryALTinterwordstretchfactor\fontdimen3\font minus
  \fontdimen4\font\relax}
\providecommand{\BIBforeignlanguage}[2]{{%
\expandafter\ifx\csname l@#1\endcsname\relax
\typeout{** WARNING: IEEEtran.bst: No hyphenation pattern has been}%
\typeout{** loaded for the language `#1'. Using the pattern for}%
\typeout{** the default language instead.}%
\else
\language=\csname l@#1\endcsname
\fi
#2}}
\providecommand{\BIBdecl}{\relax}
\BIBdecl

\bibitem{carpentier1962}
J.~Carpentier, ``{Contribution a l'Etude du Dispatching Economique},''
  \emph{Bull. Soc. Franc. des Electriciens}, vol.~8, no.~3, pp. 431--447, 1962.

\bibitem{stott2012}
B.~Stott and O.~Alsa{\c{c}}, ``{Optimal Power Flow--Basic Requirements for
  Real-Life Problems and their Solutions},'' in \emph{12th Symp. Specialists in
  Electric Operational and Expansion Planning (SEPOPE)}, May 2012.

\bibitem{capitanescu2011stateoftheart}
F.~Capitanescu \emph{et~al.}, ``{State-of-the-Art, Challenges, and Future
  Trends in Security Constrained Optimal Power Flow},'' \emph{Electric Power
  Syst. Res.}, vol.~81, no.~8, pp. 1731 -- 1741, 2011.

\bibitem{opf_litreview1993IandII}
J.~A. Momoh, R.~Adapa, and M.~E. El-Hawary, ``{A Review of Selected Optimal
  Power Flow Literature to 1993. Parts I and II},'' \emph{IEEE Transactions on
  Power Systems}, vol.~14, no.~1, pp. 96--111, Feb. 1999.

\bibitem{ferc4}
A.~Castillo and R.~P. O'Neill, ``{Survey of Approaches to Solving the ACOPF
  (OPF Paper 4)},'' US Federal Energy Regulatory Commission, Tech. Rep., March
  2013.

\bibitem{Frank2012}
S.~Frank, I.~Steponavice, and S.~Rebennack, ``{Optimal Power Flow: A
  Bibliographic Survey I},'' \emph{Energy Syst.}, vol.~3, no.~3, pp. 221--258,
  Sept. 2012.

\bibitem{pscc2014survey}
P.~Panciatici \emph{et~al.}, ``{Advanced Optimization Methods for Power
  Systems},'' in \emph{18th Power Syst. Comput. Conf. (PSCC)}, Aug. 2014, pp.
  1--18.

\bibitem{abdi2017}
H.~Abdi, S.~D. Beigvand, and M.~L. Scala, ``{A Review of Optimal Power Flow
  Studies applied to Smart Grids and Microgrids},'' \emph{Renewable Sustainable
  Energy Rev.}, vol.~71, pp. 742--766, May 2017.

\bibitem{molzahn2018fnt}
D.~K. Molzahn and I.~A. Hiskens, ``{A Survey of Relaxations and Approximations
  of the Power Flow Equations},'' \emph{Found. Trends Electric Energy Syst.},
  vol.~4, no. 1-2, pp. 1--221, Feb. 2019.

\bibitem{hong1993}
Y.-Y. Hong, C.-M. Liao, and T.-G. Lu, ``{Application of Newton Optimal Power
  Flow to Assessment of VAR Control Sequences on Voltage Security: Case Studies
  for a Practical Power System},'' \emph{IEE Proc. C (Generation, Transmission
  and Distribution)}, vol. 140, pp. 539--544, Nov. 1993.

\bibitem{Capitanescu2011}
F.~Capitanescu and L.~Wehenkel, ``{Redispatching Active and Reactive Powers
  using a Limited Number of Control Actions},'' \emph{IEEE Trans. Power Syst.},
  vol.~26, no.~3, pp. 1221--1230, Aug. 2011.

\bibitem{phan2014minimal}
D.~T. Phan and X.~A. Sun, ``{Minimal Impact Corrective Actions in
  Security-Constrained Optimal Power Flow via Sparsity Regularization},''
  \emph{IEEE Trans. Power Syst.}, vol.~30, no.~4, pp. 1947--1956, 2014.

\bibitem{bukhsh_tps}
W.~A. Bukhsh, A.~Grothey, K.~I.~M. McKinnon, and P.~A. Trodden, ``{Local
  Solutions of the Optimal Power Flow Problem},'' \emph{IEEE Trans. Power
  Syst.}, vol.~28, no.~4, pp. 4780--4788, 2013.

\bibitem{Molzahn2017}
D.~K. Molzahn, ``{Computing the Feasible Spaces of Optimal Power Flow
  Problems},'' \emph{IEEE Trans. Power Syst.}, vol.~32, no.~6, pp. 4752--4763,
  Nov. 2017.

\bibitem{Lee2018}
D.~{Lee}, H.~D. {Nguyen}, K.~{Dvijotham}, and K.~{Turitsyn}, ``{Convex
  Restriction of Power Flow Feasibility Sets},'' \emph{IEEE Trans. Control
  Netw. Syst.}, vol.~6, no.~3, pp. 1235--1245, Sept. 2019.

\bibitem{garcia81a}
C.~B. Garcia and W.~I. Zangwill, \emph{{Pathways to Solutions, Fixed Points and
  Equilibria}}.\hskip 1em plus 0.5em minus 0.4em\relax Englewood Cliffs, NJ:
  Prentice Hall, 1981.

\bibitem{continuation}
V.~{Ajjarapu} and C.~{Christy}, ``{The Continuation Power Flow: A Tool for
  Steady State Voltage Stability Analysis},'' \emph{IEEE Trans. Power Syst.},
  vol.~7, no.~1, pp. 416--423, Feb. 1992.

\bibitem{ieee02a}
I.~Dobson \emph{et~al.}, ``{Voltage Stability Assessment: Concepts, Practices
  and Tools},'' IEEE Power Engineering Society, Power System Stability
  Subcommittee Special Publication SP101PSS, Aug. 2002.

\bibitem{milano2010}
F.~Milano, \emph{{Continuation Power Flow Analysis}}.\hskip 1em plus 0.5em
  minus 0.4em\relax Berlin, Heidelberg: Springer Berlin Heidelberg, 2010, pp.
  103--130.

\bibitem{hiskens_boundary}
I.~A. {Hiskens} and R.~J. {Davy}, ``{Exploring the Power Flow Solution Space
  Boundary},'' \emph{IEEE Trans. Power Syst.}, vol.~16, no.~3, pp. 389--395,
  Aug. 2001.

\bibitem{thorp1993}
W.~{Ma} and J.~S. {Thorp}, ``{An Efficient Algorithm to Locate All the Load
  Flow Solutions},'' \emph{IEEE Trans. Power Syst.}, vol.~8, no.~3, pp.
  1077--1083, Aug. 1993.

\bibitem{molzahn_lesieutre_chen-counterexample}
D.~K. Molzahn, B.~C. Lesieutre, and H.~Chen, ``{Counterexample to a
  Continuation-Based Algorithm for Finding All Power Flow Solutions},''
  \emph{IEEE Trans. Power Syst.}, vol.~28, no.~1, pp. 564--565, Feb. 2013.

\bibitem{lesieutre2015}
B.~C. {Lesieutre} and D.~{Wu}, ``{An Efficient Method to Locate All the Load
  Flow Solutions--Revisited},'' in \emph{53rd Annu. Allerton Conf. Commun.,
  Control, Comput. (Allerton)}, Sept. 2015, pp. 381--388.

\bibitem{ponrajah1989}
R.~A. {Ponrajah} and F.~D. {Galiana}, ``{The Minimum Cost Optimal Power Flow
  Problem Solved via the Restart Homotopy Continuation Method},'' \emph{IEEE
  Trans. Power Syst.}, vol.~4, no.~1, pp. 139--148, Feb. 1989.

\bibitem{huneault1990investigation}
M.~Huneault and F.~Galiana, ``{An Investigation of the Solution to the Optimal
  Power Flow Problem incorporating Continuation Methods},'' \emph{IEEE Trans.
  Power Syst.}, vol.~5, no.~1, pp. 103--110, 1990.

\bibitem{cvijic2012}
S.~{Cvijic}, P.~{Feldmann}, and M.~{Hie}, ``{Applications of Homotopy for
  solving AC Power Flow and AC Optimal Power Flow},'' in \emph{IEEE Power and
  Energy Society General Meeting (PESGM)}, July 2012, pp. 1--8.

\bibitem{Bie18Toward}
P.~{Bie}, H.~{Chiang}, B.~{Zhang}, and N.~{Zhou}, ``{Toward Online Multi-Period
  Power Dispatch with AC Constraints and Renewable Energy},'' \emph{IET
  Generation, Transmission \& Distribution}, vol.~12, no.~14, pp. 3502--3509,
  2018.

\bibitem{wu_molzahn_lesieutre_dvijotham-acopf_tracing}
D.~Wu, D.~K. Molzahn, B.~C. Lesieutre, and K.~Dvijotham, ``{A Deterministic
  Method to Identify Multiple Local Extrema for the AC Optimal Power Flow
  Problem},'' \emph{IEEE Trans. Power Syst.}, vol.~33, no.~1, pp. 654--668,
  Jan. 2018.

\bibitem{huneault1985continuation}
M.~{Huneault}, A.~{Fahmideh-Vojdani}, M.~{Juman}, R.~{Calderon}, and F.~D.
  {Galiana}, ``{The Continuation Method in Power System Optimization:
  Applications to Economy-Security Functions},'' \emph{IEEE Trans. Power Appar.
  Syst.}, vol. PAS-104, no.~1, pp. 114--124, Jan. 1985.

\bibitem{almeida1994}
K.~C. {Almeida}, F.~D. {Galiana}, and S.~{Soares}, ``{A General Parametric
  Optimal Power Flow},'' \emph{IEEE Transa. Power Syst.}, vol.~9, no.~1, pp.
  540--547, Feb. 1994.

\bibitem{ajjarapu1995optimal}
V.~Ajjarapu and N.~Jain, ``{Optimal Continuation Power Flow},'' \emph{Electric
  Power Syst. Res.}, vol.~35, no.~1, pp. 17--24, 1995.

\bibitem{almeida2000}
K.~C. {Almeida} and R.~{Salgado}, ``{Optimal Power Flow Solutions under
  Variable Load Conditions},'' \emph{IEEE Trans. Power Syst.}, vol.~15, no.~4,
  pp. 1204--1211, Nov. 2000.

\bibitem{mpc_nerc}
N.~{Atic}, D.~{Rerkpreedapong}, A.~{Hasanovic}, and A.~{Feliachi}, ``{NERC
  Compliant Decentralized Load Frequency Control Design using Model Predictive
  Control},'' in \emph{IEEE Power Engineering Society General Meeting (PESGM)},
  vol.~2, July 2003, pp. 554--559.

\bibitem{mpc_agc}
A.~N. {Venkat}, I.~A. {Hiskens}, J.~B. {Rawlings}, and S.~J. {Wright},
  ``{Distributed MPC Strategies With Application to Power System Automatic
  Generation Control},'' \emph{IEEE Trans. Control Syst. Tech.}, vol.~16,
  no.~6, pp. 1192--1206, Nov. 2008.

\bibitem{moradzadeh2012}
M.~Moradzadeh, R.~Boel, and L.~Vandevelde, ``{Voltage Coordination in
  Multi-Area Power Systems via Distributed Model Predictive Control},''
  \emph{IEEE Trans. Power Syst.}, vol.~28, no.~1, pp. 513--521, 2012.

\bibitem{fuchs2013}
A.~Fuchs, M.~Imhof, T.~Demiray, and M.~Morari, ``{Stabilization of Large Power
  Systems using VSC--HVDC and Model Predictive Control},'' \emph{IEEE Trans.
  Power Del.}, vol.~29, no.~1, pp. 480--488, 2013.

\bibitem{martin2016}
J.~A. Martin and I.~A. Hiskens, ``{Corrective Model-Predictive Control in Large
  Electric Power Systems},'' \emph{IEEE Trans. Power Syst.}, vol.~32, no.~2,
  pp. 1651--1662, 2016.

\bibitem{Bernstein2015}
A.~Bernstein, L.~Reyes-Chamorro, J.-Y. {Le Boudec}, and M.~Paolone, ``{A
  Composable Method for Real-Time Control of Active Distribution Networks with
  Explicit Power Setpoints. Part I: Framework},'' \emph{Electric Power Syst.
  Res.}, vol. 125, pp. 254--264, Aug. 2015.

\bibitem{Wang2017}
C.~Wang, J.-Y. {Le Boudec}, and M.~Paolone, ``{Controlling the Electrical State
  via Uncertain Power Injections in Three-Phase Distribution Networks},''
  \emph{IEEE Trans. Smart Grid}, vol.~10, no.~2, pp. 1349--1362, March 2019.

\bibitem{conn2000trust}
A.~R. Conn, N.~I.~M. Gould, and P.~L. Toint, \emph{{Trust Region
  Methods}}.\hskip 1em plus 0.5em minus 0.4em\relax SIAM, 2000, vol.~1.

\bibitem{nocedal2006numerical}
J.~Nocedal and S.~Wright, \emph{{Numerical Optimization}}.\hskip 1em plus 0.5em
  minus 0.4em\relax Springer Science {\&} Business Media, 2006.

\bibitem{zamzam2017}
A.~S. Zamzam, C.~Zhao, E.~Dall'Anese, and N.~D. Sidiropoulos, ``{A QCQP
  Approach for OPF In Multiphase Radial Networks with Wye and Delta
  Connections},'' in \emph{10th IREP Symp. Bulk Power Syst. Dynamics Control},
  August 2017.

\bibitem{wei2017}
W.~Wei, J.~Wang, N.~Li, and S.~Mei, ``{Optimal Power Flow of Radial Networks
  and Its Variations: A Sequential Convex Optimization Approach},'' \emph{IEEE
  Trans. Smart Grid}, vol.~8, no.~6, pp. 2974--2987, Nov. 2017.

\bibitem{brouwer1911abbildung}
L.~E.~J. Brouwer, ``{{\"U}ber Abbildung von Mannigfaltigkeiten},''
  \emph{Mathematische Annalen}, vol.~71, no.~1, pp. 97--115, 1911.

\bibitem{pglib}
\BIBentryALTinterwordspacing
{IEEE PES Task Force on Benchmarks for Validation of Emerging Power System
  Algorithms}, ``{The Power Grid Library for Benchmarking {AC} Optimal Power
  Flow Algorithms},'' \emph{arXiv:1908.02788}, Aug. 2019. [Online]. Available:
  \url{https://github.com/power-grid-lib/pglib-opf}
\BIBentrySTDinterwordspacing

\bibitem{jump}
I.~Dunning, J.~Huchette, and M.~Lubin, ``{JuMP: A Modeling Language for
  Mathematical Optimization},'' \emph{SIAM Rev.}, vol.~59, no.~2, 2017.

\bibitem{zimmerman11}
R.~D. Zimmerman, C.~E. Murillo-S{\'{a}}nchez, and R.~J. Thomas,
  ``{\mbox{MATPOWER:} Steady-State Operations, Planning, and Analysis Tools for
  Power Systems Research and Education},'' \emph{IEEE Trans. Power Syst.},
  vol.~26, no.~1, pp. 12--19, 2011.

\bibitem{chow1982time}
J.~H. Chow, G.~Peponides, P.~V. Kokotovic, B.~Avramovic, and J.~R. Winkelman,
  \emph{{Time-Scale Modeling of Dynamic Networks with Applications to Power
  Systems}}.\hskip 1em plus 0.5em minus 0.4em\relax Springer, 1982, vol.~46.

\bibitem{pai2012energy}
M.~A. Pai, \emph{{Energy Function Analysis for Power System Stability}}.\hskip
  1em plus 0.5em minus 0.4em\relax Springer Science \& Business Media, 2012.

\bibitem{lee2019sequential}
D.~Lee, K.~Turitsyn, and J.-J. Slotine, ``{Sequential Convex Restriction and
  its Applications in Robust Optimization},'' \emph{arXiv:1909.01778}, 2019.

\end{thebibliography}

\end{document}